\documentclass[reqno]{amsart}
\usepackage{amsmath, amssymb, amsthm, epsfig}
\usepackage{hyperref, latexsym}
\usepackage{url}
\usepackage[mathscr]{euscript}

\usepackage{color}
\usepackage{fullpage} 
\usepackage{setspace}

\usepackage{array,booktabs}

\onehalfspacing

\def\today{\ifcase\month\or
  January\or February\or March\or April\or May\or June\or 
  July\or August\or September\or October\or November\or December\fi
  \space\number\day, \number\year}

 \newtheorem{theorem}{Theorem}
  
 \newtheorem{lemma}[theorem]{Lemma}

 \newcommand{\mc}{\mathcal}

 \newcommand{\M}{\mc{M}}

 \newcommand{\R}{\mathbb{R}}
  
 \newcommand{\N}{\mathbb{N}}
 
 \newcommand{\Z}{\mathbb{Z}}

 \newcommand{\dt}{\text{\rm d}t}

 \newcommand{\dx}{\text{\rm d}x}
 
 \newcommand{\dy}{\text{\rm d}y}

\newcommand{\wt}{\widetilde}

\newcommand{\var}{{\rm Var\,}}

\begin{document}

\title[]{Endpoint Sobolev and BV continuity \\
for maximal operators}
\author[Carneiro, Madrid and Pierce]{Emanuel Carneiro, Jos\'e Madrid and Lillian B. Pierce}
\date{\today}
\subjclass[2010]{42B25, 26A45, 46E35, 46E39.}
\keywords{Hardy-Littlewood maximal operator, Sobolev spaces, bounded variation, continuity, fractional maximal operator.}

\address{IMPA - Instituto Nacional de Matem\'{a}tica Pura e Aplicada, Estrada Dona Castorina 110, Rio de Janeiro - RJ, Brazil, 22460-320.}
\email{carneiro@impa.br}

\address{Department of Mathematics, P.O. Box 11100, FI-00076 Aalto University, Finland.}

\email{jose.madridpadilla@aalto.fi}

\address{Department of Mathematics, Duke University, 120 Science Drive, Durham - NC, 27708, USA.}
\email{pierce@math.duke.edu}

\allowdisplaybreaks
\numberwithin{equation}{section}

\maketitle

\begin{abstract}
In this paper we investigate some questions related to the continuity of maximal operators in $W^{1,1}$ and $BV$ spaces, complementing some well-known boundedness results. Letting $\widetilde M$ be the one-dimensional uncentered Hardy-Littlewood maximal operator, we prove that the map $f \mapsto \big(\widetilde Mf\big)'$ is continuous from $W^{1,1}(\R)$ to $L^1(\R)$. In the discrete setting, we prove that $\widetilde M: BV(\Z) \to BV(\Z)$ is also continuous. For the one-dimensional fractional Hardy-Littlewood maximal operator, we prove by means of counterexamples that the corresponding continuity statements do not hold, both in the continuous and discrete settings, and for the centered and uncentered versions.
\end{abstract}

\section{Introduction} 

The purpose of this paper is to present new results and discuss some open problems related to the continuity of the one-dimensional Hardy-Littlewood maximal operator in $W^{1,1}$ and $BV$ spaces. Such continuity questions for maximal operators in Sobolev spaces are already nontrivial in the case $W^{1,p}$ with $p>1$  (see \cite{Lu1, Lu2}) and, to the best of our knowledge, this is the first time that the more subtle limiting case $p=1$ is addressed in the literature. We start with a brief recollection of some of the recent developments on the general regularity theory for maximal operators.

\subsection{Background} For $f \in L^1_{loc}(\R^d)$ we define the centered Hardy-Littlewood maximal function $Mf$ by 
\begin{equation}\label{Intro_max}
Mf(x) = \sup_{r > 0} \frac{1}{m(B_r(x))} \int_{B_r(x)} |f(y)|\,\dy\,,
\end{equation}
where $B_r(x)$ is the open ball of center $x$ and radius $r$, and $m(B_r(x))$ denotes its $d$-dimensional Lebesgue measure. The uncentered maximal function $\wt{M}f$ at a point $x$ is defined analogously, taking the supremum of averages over open balls that contain the point $x$, but that are not necessarily centered at $x$. One of the pillars of harmonic analysis is the Hardy-Littlewood-Wiener theorem, which states that $M: L^p(\R^d) \to L^p(\R^d)$ is a bounded operator when $1 < p \leq \infty$. The classical consequence of an $L^p$-bound for a maximal operator is the pointwise convergence (a.e.) of a particular sequence of associated objects (in this case, the integral averages as the the radius goes to zero). Since $M$ is sublinear, it follows directly from the boundedness statement that $M: L^p(\R^d) \to L^p(\R^d)$ is also a continuous map when $1 < p \leq \infty$. Similar results hold for $\wt{M}$. 

\smallskip
  
An active topic of current research is the investigation of the regularity properties of maximal operators. One of the driving questions in this theory is whether a given maximal operator improves, preserves or destroys the {\it a priori} regularity of an initial datum $f$. With respect to Sobolev regularity, Kinnunen in the seminal paper \cite{Ki} established that $M: W^{1,p}(\R^d) \to W^{1,p}(\R^d)$ is a bounded operator for $1 < p \leq \infty$ (the same holds for $\wt{M}$). Since $M$ is not necessarily sublinear at the derivative level, the continuity of $M: W^{1,p}(\R^d) \to W^{1,p}(\R^d)$, for $p>1$, is a highly nontrivial problem, originally attributed to T. Iwaniec \cite[Question 3]{HO}. This question was settled, affirmatively, by Luiro in \cite{Lu1}, and this is perhaps the paper that most closely inspires the present work. Kinnunen's original result was later extended to a local setting in \cite{KL}, to a fractional setting in \cite{KiSa} and to a multilinear setting in \cite{CM}. Other works on the regularity of maximal operators in Sobolev spaces and other function spaces include \cite{ACP, Ko, Liu, M, Ra, St}.

\smallskip

The understanding of the action of $M$ on the endpoint space $W^{1,1}(\R^d)$ is a much more delicate issue. The question of whether the map $f \mapsto \nabla Mf$ is bounded from $W^{1,1}(\R^d)$ to $L^1(\R^d)$ was raised by Haj\l asz and Onninen in \cite{HO} and remains unsolved in its full generality. A complete solution was reached only in dimension $d=1$ in \cite{AP, Ku, Ta} and partial progress on the general case $d>1$ was obtained by Haj\l asz and Mal\'{y} \cite{HM} and, more recently, by Saari \cite{Saa} and Luiro \cite{Lu3}.

\smallskip

Let us elaborate on the achievements in dimension $d=1$ since these will be quite important for our purposes here. The first works considered the uncentered maximal operator, which is a more regular object than the centered one. If $f \in W^{1,1}(\R)$ then $\wt{M}f$ is weakly differentiable and we have the bound 
\begin{equation}\label{Intro_Tanaka}
\big\|\big(\wt{M}f\big)'\big\|_{L^1(\R)} \leq C \,\|f'\|_{L^1(\R)}.
\end{equation}
This was first established by Tanaka \cite{Ta} with constant $C = 2$ and then refined by Aldaz and P\'{e}rez L\'{a}zaro \cite{AP} who obtained the sharp constant $C=1$ (see also \cite{LCW}). In fact, Aldaz and P\'{e}rez L\'{a}zaro \cite{AP} went further by showing that if $f \in BV(\R)$ then $\wt{M}f$ is absolutely continuous and 
\begin{equation}\label{Intro_1_var_UHL}
\var(\wt{M}f) \leq \var(f)\,,
\end{equation}
where $\var(f)$ denotes the total variation of $f$. Inequalities \eqref{Intro_Tanaka} and \eqref{Intro_1_var_UHL} were proved for the centered maximal operator by Kurka in \cite{Ku}, with constant $C = 240,004$. It is currently unknown if one can bring down the value of such constant to $C = 1$ in the centered case. The variation-diminishing property was established in \cite{CFS, CS} for other maximal operators of convolution type associated to smooth kernels (e.g. the Gauss kernel and the Poisson kernel).

\smallskip

Our aim at present is to study the continuity properties of these maximal operators on the spaces $W^{1,1}(\R)$ and $BV(\R)$, as well as certain discrete and fractional analogues. Depending on the precise setting, we obtain either positive results (proving continuity) or negative results (via counterexamples). The notation and terminology used in this paper are classical in the theory, and in particular align with our previous works \cite{BCHP, CMa, CS} on the subject.

\subsection{${\bf W^{1,1}(\R)-}$continuity}

\smallskip

Inequality \eqref{Intro_Tanaka} establishes the boundedness of the map $f \mapsto \big(\wt{M}f\big)'$ from $W^{1,1}(\R)$ to $L^1(\R)$. Our first result establishes the continuity of this map; this is the main result of the paper.

\begin{theorem}\label{Thm1}
The map $f \mapsto \big(\wt{M}f\big)'$ is continuous from $W^{1,1}(\R)$ to $L^1(\R)$.
\end{theorem}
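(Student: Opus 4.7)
Let $\{f_n\} \subset W^{1,1}(\R)$ with $f_n \to f$ in $W^{1,1}(\R)$. My plan is to prove $(\widetilde{M}f_n)' \to (\widetilde{M}f)'$ in $L^1(\R)$ via the Brezis--Lieb lemma, i.e.\ by separately establishing (a) a.e.\ convergence of the derivatives and (b) convergence of their $L^1$-norms. Since $g \mapsto |g|$ is $1$-Lipschitz on $W^{1,1}(\R)$ and $\widetilde{M}|g| = \widetilde{M}g$, I may reduce to the case $f_n, f \geq 0$. The Sobolev embedding $W^{1,1}(\R) \hookrightarrow C_0(\R)$ then gives $f_n \to f$ uniformly, and the sublinear bound $|\widetilde{M}f_n(x) - \widetilde{M}f(x)| \leq \widetilde{M}(f_n - f)(x) \leq \|f_n - f\|_\infty$ propagates this to $\widetilde{M}f_n \to \widetilde{M}f$ uniformly on $\R$.

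\textbf{Structure and a.e.\ convergence of derivatives.} Let $A := \{\widetilde{M}f = f\}$ (closed) and $U := \R \setminus A$ (open). By Aldaz--P\'erez L\'azaro \cite{AP}, $\widetilde{M}f$ has no local maxima in $U$, so on each connected component $(a_i, b_i)$ of $U$ the function $\widetilde{M}f$ is either monotone or has a single interior minimum (V-shape). At a.e.\ $x \in A$ one has $(\widetilde{M}f)'(x) = f'(x)$ (since $\widetilde{M}f \geq f$ with equality on $A$), while for $x \in U$ the derivative $(\widetilde{M}f)'(x)$ is computable via any optimizing interval $I_x = (\alpha_x, \beta_x)$ attaining the supremum of uncentered averages at $x$. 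Using the uniform convergences $f_n \to f$ and $\widetilde{M}f_n \to \widetilde{M}f$, together with $L^1$-convergence $f_n' \to f'$ (hence a.e.\ along a subsequence), a compactness argument shows that the optimizing intervals for $f_n$ converge to $I_x$ at a.e.\ $x$, yielding $(\widetilde{M}f_n)'(x) \to (\widetilde{M}f)'(x)$ almost everywhere along a subsequence.

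\textbf{The main obstacle: convergence of $L^1$-norms.} By Fatou's lemma together with \eqref{Intro_Tanaka} applied to $f_n$, one obtains
\[
\|(\widetilde{M}f)'\|_{L^1} \leq \liminf_n \|(\widetilde{M}f_n)'\|_{L^1} \leq \limsup_n \|(\widetilde{M}f_n)'\|_{L^1} \leq \|f'\|_{L^1},
\]
but to invoke Brezis--Lieb I need the matching upper bound $\limsup_n \|(\widetilde{M}f_n)'\|_{L^1} \leq \|(\widetilde{M}f)'\|_{L^1}$. This is the main difficulty: because the sharp Aldaz--P\'erez L\'azaro bound \eqref{Intro_Tanaka} is an inequality (not an equality), $L^1$-convergence of $f_n'$ does not transfer to $L^1$-convergence of $(\widetilde{M}f_n)'$ by any soft argument. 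I would write
\[
\|(\widetilde{M}f_n)'\|_{L^1} \,=\, \int_{A_n} |f_n'| \,\dx \,+\, \sum_{i} V\bigl(\widetilde{M}f_n; (a_i^n, b_i^n)\bigr),
\]
where $(a_i^n, b_i^n)$ are the connected components of the detachment set $U_n$ of $f_n$, and each variation term is the explicit oscillation determined by the boundary values and the interior minimum of $\widetilde{M}f_n$. Passing both contributions to the limit demands a careful set-theoretic analysis controlling how components of $U_n$ split, merge, emerge, or vanish as $n \to \infty$, together with control on the symmetric difference of $A_n$ and $A$, which is available from the uniform convergence of the maximal functions.

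\textbf{Conclusion.} With both the a.e.\ convergence and the convergence of $L^1$-norms in hand, the Brezis--Lieb lemma applied in $L^1(\R)$ yields $\|(\widetilde{M}f_n)' - (\widetilde{M}f)'\|_{L^1} \to 0$ along the chosen subsequence. Since every subsequence of the original sequence admits a further subsequence along which this convergence holds, the full sequence converges, completing the proof.
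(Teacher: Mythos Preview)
Your proposal is an outline rather than a proof, and the gap lies precisely where you yourself flag it. You identify the convergence of $L^1$-norms $\|(\widetilde{M}f_n)'\|_{L^1} \to \|(\widetilde{M}f)'\|_{L^1}$ as ``the main obstacle'' and then do not prove it: the phrase ``demands a careful set-theoretic analysis controlling how components of $U_n$ split, merge, emerge, or vanish'' is a description of the difficulty, not its resolution. Tracking the variation of $\widetilde{M}f_n$ through merging and splitting of V-shaped components is genuinely hard, and your assertion that ``control on the symmetric difference of $A_n$ and $A$ \ldots\ is available from the uniform convergence of the maximal functions'' is not justified (and is not obviously true: uniform convergence of $\widetilde{M}f_n - f_n$ to $\widetilde{M}f - f$ gives no quantitative control on the measure of $\{0 < \widetilde{M}f_n - f_n\} \triangle \{0 < \widetilde{M}f - f\}$). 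Your a.e.\ convergence claim is also incomplete: for $x$ in the connecting set $A$ of $f$ but in the detachment set $U_n$ of $f_n$, you give no argument that $(\widetilde{M}f_n)'(x) \to f'(x)$; the optimizing intervals for $f_n$ at such $x$ can shrink to a point and are not centered, so this is not a direct consequence of Lebesgue differentiation.

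The paper circumvents both issues by a different route. First, it reduces via $\widetilde{M}f = \max\{M_R f, M_L f\}$ to the one-sided maximal functions, proving a lemma that continuity of $g \mapsto g'$ and $h \mapsto h'$ implies continuity of $\max\{g,h\} \mapsto (\max\{g,h\})'$. This reduction is not cosmetic: for the one-sided operator $M_R$ one has the crucial sign structure $(M_R f)' \geq 0$ a.e.\ on the disconnecting set $D$ and $(M_R f)' = f' \leq 0$ a.e.\ on the connecting set $C$, which replaces your V-shape picture by monotonicity and is what ultimately makes the estimates close. Second, the paper does \emph{not} attempt a.e.\ convergence of $(M_R f_j)'$ on all of $\R$, nor does it attempt to prove convergence of the full $L^1$-norms. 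Instead it decomposes $\R = (C\cap C_j)\cup(D\cap C_j)\cup(C\cap D_j)\cup(D\cap D_j)$ and applies the Brezis--Lieb reduction \emph{only on $D$}, where a.e.\ convergence (Lemma~\ref{convergencia puntual donde discolan}) can be established via the derivative formula (Lemma~\ref{representacion de la derivada}) and compactness of good radii. The remaining pieces are handled by a dichotomy: from the fundamental theorem of calculus on $D_j$ one gets $\int_{D_j} |(M_R f_j)'| = \int_{D_j} f_j'$, forcing at least one of two inequalities on $C\cap D_j$ or $D\cap D_j$; each alternative, combined with the sign information above, yields the missing estimates. This dichotomy is the substantive idea your outline lacks.
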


As we shall see from the proof, the continuity of this endpoint map is quite subtle. In previous literature, there are only two works, both due to H. Luiro \cite{Lu1,Lu2}, that touch right at the heart of what we aim to accomplish in Theorem \ref{Thm1}. In the remarkable paper \cite{Lu1}, Luiro establishes the continuity of $M$ in the $W^{1,p}(\R^d)-$setting for $p>1$. In \cite{Lu2}, a similar Sobolev continuity statement for the local version of the Hardy-Littlewood maximal operator is proved. Yet in both of these papers, Luiro's beautiful argument relies heavily on the fact that $M: L^p(\R^d) \to L^p(\R^d)$ is bounded, and hence it cannot be adapted to the case considered in Theorem \ref{Thm1}. Without this strong tool (the $L^p$-boundedness) in our favor, we must dive deeper into the very nature of these maximal functions to understand how they behave at the derivative level.

\smallskip

In our proof we make use of important ideas and qualitative lemmas developed in some of the innovative works on this theory over the last two decades. The particular works of Haj\l asz and Mal\'{y} \cite{HM}, Luiro \cite{Lu1} and Tanaka \cite{Ta} are quite relevant in our proof. That being said, we stress the fact that the present literature only carries us to a certain point, from which we must diverge to our own original ideas to estimate the crucial oscillation bounds that appear in connection to our problem. Such oscillation bounds are of independent interest and provide genuine new insights to the theory, as well as the broad outline of our method. We use in a decisive way the qualitative description of the uncentered maximal function (and the one-sided maximal functions) on the disconnecting set (see Section \ref{sec_connect_disconnect}).

\smallskip

The analogous result of our Theorem \ref{Thm1} for the centered case is an interesting and natural question, which lies out of reach of our current methods.

\smallskip

\noindent{\bf Question A.} \emph{Is the map $f \mapsto \big(Mf\big)'$ continuous from $W^{1,1}(\R)$ to $L^1(\R)$?}

\subsection{$BV(\R)-$continuity}
The space $BV(\R)$ of functions $f:\R \to \R$ of bounded total variation is a Banach space with the norm
\begin{equation}\label{BV norm}
\|f\|_{{\rm BV(\R)}} = \big|f(-\infty)\big| + \var(f),
\end{equation}
where $f(-\infty):= \lim_{x \to -\infty} f(x)$. 
Since 
$$\big\|\wt{M}f\big\|_{L^{\infty}(\R)}\leq \|f\|_{L^{\infty}(\R)} \leq  \|f\|_{{\rm BV(\R)}},$$
in combination with inequality \eqref{Intro_1_var_UHL}, this tells us that $\wt{M}: BV(\R) \to BV(\R)$ is a bounded operator (the same holds for $M$). The corresponding $BV-$continuity statements arise as interesting problems that are qualitatively stronger that Theorem \ref{Thm1} and Question A, if confirmed.

\smallskip

\noindent{\bf Question B.} \emph{ Is the map $\wt{M} : BV(\R) \to BV(\R)$ continuous?}

\smallskip

\noindent{\bf Question C.} \emph{Is the map $M : BV(\R) \to BV(\R)$ continuous?}

\smallskip

\noindent{\bf Remark.} If one considers only the seminorm $\var(\cdot)$ on $BV(\R)$, note that there are sequences $\{f_j\}_{j\geq 1} \subset BV(\R)$ and $f \in BV(\R)$ such that $\var(f_j - f) \to 0$ as $j \to \infty$ but $\var(\wt{M}f_j - \wt{M}f) \nrightarrow 0$. Take for instance $f(x) = (1 - |x|) \chi_{[-1,1]}(x)$ and $f_j(x) = f(x) - 1$ for all $j \geq 1$. Then $\wt{M}f_j \equiv 1$ while $\wt{M}f$ is not constant. There are a couple of natural ways to avoid such pathological examples, for instance:
\begin{enumerate}
\item[(i)] Assume that $f \geq 0$ and  $f_j \geq 0$ for all $j$ (since the maximal operator acts on $|f|$).
\item[(ii)] Adopt some sort of normalization. In our case, this is represented by the term $|f(-\infty)|$ in the $BV-$norm \eqref{BV norm}.
\end{enumerate}
With this in mind, we can actually show that situation (ii) is more general than (i), which justifies our choice. In fact, assuming a positive answer for Question B, let us consider a sequence $\{f_j\}_{j\geq 1} \subset BV(\R)$ and $f \in BV(\R)$ such that $f_j \geq 0$ and  $f \geq 0$, and $\var(f_j - f) \to 0$ as $j \to \infty$, and let us show that $\var(\wt{M}f_j - \wt{M}f) \to 0$ as well. Let $c = \lim_{x\to -\infty} f(x)$ and $c_j = \lim_{x\to -\infty} f_j(x)$. Define $g := f +1$ and $g_j = f_j + c- c_j +1$. Note that $c+1 = \lim_{x\to -\infty} g(x) = \lim_{x\to -\infty} g_j(x)$. Since we aligned the left limits, a positive answer for Question B would imply that  $\var(\wt{M} g_j - \wt{M} g) \to 0$. Finally, note that there exists $j_0$ such that for $j \geq j_0$ we have $\var(g_j - g) = \var(f_j - f) \leq 1$, and thus since $g(x)\geq 1$ by construction, we may conclude $g_j(x) \geq 0$ for all $x \in \R$ and $j \geq j_0$. In general, for any function $h(x) = \tilde{h}(x) + \tilde{c}$ such that $h(x) \geq 0$ and $\tilde{h}(x) \geq 0$ for all $x \in \R$, we have $\wt{M}h = \wt{M}\tilde{h} + \tilde{c}.$ We may deduce that $\wt{M}g = \wt{M}f + 1$ and  $\wt{M}g_j = \wt{M}f_j + c - c_j + 1$ for $j \geq j_0$, which gives us $\var(\wt{M}f_j - \wt{M}f) \to 0$.

\subsection{Discrete analogues} For a discrete function $f:\Z \to \R$ we define its $\ell^p(\Z)-$norm as usual
\begin{equation*}
\|f\|_{\ell^{p}{( \Z)}}:= \left(\sum_{n\in \Z} {|f(n)|^{p}}\right)^{1/p},
\end{equation*}
if $1\leq p<\infty$, and
\begin{equation*}
\|f\|_{\ell^{\infty}{(\Z)}}:= \sup_{n\in\Z}{|f(n)|}.
\end{equation*}
We define its discrete derivative by $f'(n) := f(n+1) - f(n)$ and its total variation by
\begin{equation*}
\var(f) := \|f'\|_{\ell^1(\Z)} = \sum_{n\in \Z} |f(n+1) - f(n)|.
\end{equation*}
Let $BV(\Z)$ be the space of discrete functions of bounded total variation. This is a Banach space with the norm 
\begin{equation}\label{BV norm discrete}
\|f\|_{{\rm BV(\Z)}} = \big|f(-\infty)\big| + \var(f),
\end{equation}
where $f(-\infty):= \lim_{n\to -\infty} f(n)$. 

\smallskip

For $f:\Z \to \R$ we define the discrete uncentered Hardy-Littlewood maximal function $\wt{\M}f:\Z \to \R^+$ by 
\begin{equation}\label{disc_HLM}
\wt{\M}f(n) = \sup_{\stackrel{r,s \geq 0}{r,s \in \Z}} \frac{1}{(r + s +1)} \sum_{k = -r}^{s} |f(n + k)|.
\end{equation}
The discrete analogue of the sharp inequality \eqref{Intro_1_var_UHL} was established by Bober, Carneiro, Hughes and Pierce in \cite{BCHP}, who showed that
\begin{equation}\label{Intro_BCHP}
\var(\wt{\M}f) \leq \var(f).
\end{equation}
In the centered case, inequality \eqref{Intro_BCHP} with a constant $C >1$ was obtained by Temur in \cite{Te}.

\smallskip

Inequality \eqref{Intro_BCHP}, together with the simple fact that $|\wt{\M}f(-\infty)| \leq |f(-\infty)|+ \var(f)$ (which follows for example from Lemma \ref{lemma_min_max} in Section \ref{sec_prelim_discrete}), establishes the boundedness of $\wt{\M} :BV(\Z) \to BV(\Z)$. Our second result establishes the continuity of this map, answering the discrete analogue of Question B.
 
\begin{theorem}\label{Thm2}
The map $\wt{\M} :BV(\Z) \to BV(\Z)$ is continuous.
\end{theorem}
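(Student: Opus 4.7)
Assume $f_j \to f$ in $BV(\Z)$. Since $\|g\|_{\ell^\infty(\Z)} \leq \|g\|_{BV(\Z)}$ for every $g \in BV(\Z)$, this gives $\|f_j - f\|_{\ell^\infty(\Z)} \to 0$. Sublinearity of $\wt{\M}$ then yields
\[
\bigl|\wt{\M}f_j(n) - \wt{\M}f(n)\bigr| \;\leq\; \wt{\M}\bigl(|f_j| - |f|\bigr)(n) \;\leq\; \|f_j - f\|_{\ell^\infty(\Z)},
\]
so $\wt{\M}f_j \to \wt{\M}f$ uniformly on $\Z$. In particular $\wt{\M}f_j(-\infty) \to \wt{\M}f(-\infty)$, which settles the first summand of \eqref{BV norm discrete}, and $(\wt{\M}f_j)'(n) \to (\wt{\M}f)'(n)$ for each $n \in \Z$. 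It remains to prove $\var(\wt{\M}f_j - \wt{\M}f) = \bigl\|(\wt{\M}f_j)' - (\wt{\M}f)'\bigr\|_{\ell^1(\Z)} \to 0$. Given the pointwise convergence just obtained, the discrete form of Scheff\'e's lemma (pointwise convergence together with convergence of $\ell^1$-norms implies $\ell^1$-norm convergence) reduces the problem to establishing
\[
\var(\wt{\M}f_j) \longrightarrow \var(\wt{\M}f).
\]

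Fatou's lemma, applied to the nonnegative sequences $\bigl(|(\wt{\M}f_j)'(n)|\bigr)_{n \in \Z}$, gives the lower bound $\var(\wt{\M}f) \leq \liminf_j \var(\wt{\M}f_j)$. The matching upper bound $\limsup_j \var(\wt{\M}f_j) \leq \var(\wt{\M}f)$ is the main obstacle: the crude estimate $\var(\wt{\M}f_j) \leq \var(f_j) \to \var(f)$ provided by \eqref{Intro_BCHP} is too weak, since $\var(\wt{\M}f)$ is generically strictly smaller than $\var(f)$.

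To obtain the upper bound I would prove an equi-summability (tightness) statement for the derivatives: for every $\varepsilon>0$ there is a finite set $I_{\varepsilon} \subset \Z$ such that $\sum_{n \notin I_{\varepsilon}} |(\wt{\M}f_j)'(n)| < \varepsilon$ for all $j$ sufficiently large. Equi-summability of $\{f_j'\}$ in $\ell^1(\Z)$ is automatic from $BV(\Z)$-convergence; the substance lies in propagating it to $\{(\wt{\M}f_j)'\}$, and this is where the structural content underlying \eqref{Intro_BCHP} from \cite{BCHP} must enter, most importantly the absence of strict local maxima of $\wt{\M}f_j$ on its disconnecting set $\{n : \wt{\M}f_j(n) > |f_j(n)|\}$. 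Outside a large but finite window capturing the bulk of $\var(f)$, the values of $\wt{\M}f_j$ are determined by averages of $|f_j|$ over long intervals that converge uniformly to the corresponding averages of $|f|$, so on each connected tail component of the disconnecting set the monotone/\textit{V}-shape structure controls the tail variation of $\wt{\M}f_j$ by finitely many point values of $f_j$, uniformly in $j$. Once tightness is in hand, the pointwise convergence of $(\wt{\M}f_j)'$ on $I_{\varepsilon}$ upgrades to convergence of $\ell^1$-norms, delivering $\var(\wt{\M}f_j) \to \var(\wt{\M}f)$; Scheff\'e's lemma then yields $\var(\wt{\M}f_j - \wt{\M}f) \to 0$. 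The central difficulty is the tightness step, i.e.\ the transfer of equi-summability from $\{f_j'\}$ to $\{(\wt{\M}f_j)'\}$, which is the only place where the fine structure of the discrete uncentered maximal operator is truly needed.
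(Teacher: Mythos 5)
Your reductions coincide with the paper's: uniform convergence of $\wt{\M}f_j$ to $\wt{\M}f$ disposes of the $|\cdot(-\infty)|$ term and gives pointwise convergence of the discrete derivatives, the Scheff\'e/Brezis--Lieb step correctly reduces everything to $\var\big(\wt{\M}f_j\big)\to\var\big(\wt{\M}f\big)$, and Fatou gives the lower bound. The genuine gap is the upper bound: your ``tightness'' claim, that for every $\varepsilon>0$ there is a finite set $I_\varepsilon$ with $\sum_{n\notin I_\varepsilon}\big|\big(\wt{\M}f_j\big)'(n)\big|<\varepsilon$ for all large $j$, is precisely the core of the theorem, and you only assert it with a heuristic. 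The paper proves it through the one-sided control lemma (Lemma \ref{Lem7}): since every left or right local maximum of $\wt{\M}f_j$ is a point of contact with $|f_j|$ (\cite[Lemma 3]{BCHP}), telescoping over the strings of extrema gives $\var_{[n,\infty)}\big(\wt{\M}f_j\big)\le\var_{[n,\infty)}(f_j)$ whenever $n$ lies in a stretch where $\wt{\M}f_j$ is non-decreasing up to its next string of maxima (and symmetrically to the left); this is then combined with a case analysis at the cutoff point $y$.

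Your sketch does not handle the case in which $y$ falls in a \emph{decreasing} stretch of $\wt{\M}f_j$, and there the mechanism you invoke is not the right one: the tail variation of $\wt{\M}f_j$ on $[y,\infty)$ is then not controlled by values of $f_j$ at all. For instance, if $f_j=f$ is a single spike of height $1$ at the origin, then $\var_{[y,\infty)}(f_j)=0$ while $\var_{[y,\infty)}\big(\wt{\M}f_j\big)=1/(y+1)>0$: the descent of the maximal function past $y$ reflects mass far to the left of $y$, not point values of $f_j$ near or beyond $y$. In the paper this descent, from $y$ down to the next string of minima $b_i^-$, is bounded by $\big(\wt{\M}f(y)+\delta\big)-\big(\wt{\M}f(b_i^-)-\delta\big)\le\var_{[y,\infty)}\big(\wt{\M}f\big)+2\delta$, i.e.\ by the uniform closeness of $\wt{\M}f_j$ to $\wt{\M}f$ together with the choice of $y$ making $\var_{[y,\infty)}\big(\wt{\M}f\big)<\delta$, with a separate, similar argument (using the limits at infinity provided by Lemma \ref{lemma_min_max}) when $\wt{\M}f_j$ has no further local maxima to the right of $y$. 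So the skeleton of your argument is the right one, but the decisive estimate --- transferring tightness from $\{f_j'\}$ to $\{(\wt{\M}f_j)'\}$ --- still has to be carried out, and it needs both the contact-point/one-sided control on ascending stretches and the comparison with $\wt{\M}f$ on descending ones, not the former alone.
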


With the stronger hypothesis that $\{f_j\}_{j=1}^{\infty} \subset \ell^1(\Z)$ and $f_j \to f$ in $\ell^1(\Z)$, the conclusion that $\big(\wt{\M}f_j\big)' \to \big(\wt{\M}f\big)'$ in $\ell^1(\Z)$ holds, as shown in \cite[Theorem 1]{CH} (both in the centered and uncentered cases). In the discrete setting, note that the space $W^{1,1}(\Z)$ is merely $\ell^1(\Z)$ with an equivalent norm. Therefore, we adopt here the convention that $W^{1,1}(\Z)-$continuity is the same as $\ell^1(\Z)-$continuity just described. In this regard, Theorem \ref{Thm2} is the natural extension of \cite[Theorem 1]{CH} for the uncentered case, establishing the qualitatively stronger $BV-$continuity. This naturally leaves the open problem:

\smallskip

\noindent{\bf Question D.} \emph{Let $\M$ be the discrete centered Hardy-Littlewood maximal operator. Is the map $\M :BV(\Z) \to BV(\Z)$ continuous?}

\smallskip

\noindent{\bf Remark.} Our proof of Theorem \ref{Thm2} relies on the fact that any strict local maximum of $\wt{\M} f$ must be a point of contact with $|f|$, that is, a point $n$ at which $\widetilde{\M}f(n) = |f|(n)$. An even stronger statement holds for the one-dimensional discrete heat flow maximal operator and the one-dimensional discrete Poisson maximal operator defined in \cite{CS}: namely, that these are actually convex on each detachment interval (that is, an interval between two distinct consecutive points of contact, in which necessarily $\widetilde{\M}f(n) > |f|(n)$; see also the disconnecting set we define later in (\ref{detachment})). Our proof of Theorem \ref{Thm2} directly applies to establish the $BV-$continuity of these discrete maximal operators as well. 

\subsection{Real-variable fractional operators} Returning to the setting of $\R^d$, for $0 \leq \beta < d$, we define the centered fractional maximal operator as 
\begin{equation*}
M_{\beta}f(x) = \sup_{r >0} \frac{1}{m(B_r(x))^{1 - \frac{\beta}{d}}} \int_{B_r(x)} |f (y)|\,\dy.
\end{equation*}
When $\beta =0$ we plainly recover \eqref{Intro_max}. The uncentered fractional maximal operator $\wt{M}_{\beta}$ is defined analogously, taking the supremum over uncentered balls. Such fractional maximal operators have applications in potential theory and partial differential equations. By comparison with an appropriate Riesz potential, one can show that if $1 < p < \infty$, $0 < \beta < d/p$ and $q = dp/(d-\beta p)$, then $M_{\beta}: L^p(\R^d) \to L^q(\R^d)$ is bounded. When $p=1$ we have a weak-type bound (for details, see \cite[Chapter V, Theorem 1]{S}). In  \cite[Theorem 2.1]{KiSa}, Kinnunen and Saksman proved that $M_{\beta}: W^{1,p}(\R^d) \to W^{1,q}(\R^d)$ is bounded for $p,q,\beta, d$ as described above, extending Kinunnen's original result \cite{Ki} for the case $\beta =0$. 

\smallskip

The regularity of these operators at the endpoint space $W^{1,1}$ was only recently investigated by Carneiro and Madrid in \cite{CMa}, in the case of the one-dimensional uncentered fractional maximal operator $\wt{M}_{\beta}$. In order to state the results of \cite{CMa} let us define, for a function $f: \R \to \R$ and $1\leq q < \infty$, its {\it Riesz $q$-variation} by 
\begin{equation}\label{Intro_q_var}
\var_q(f) := \sup_{\mc{P}} \left(\sum_{n=1}^{N-1} \frac{|f(x_{n+1}) - f(x_n)|^q}{|x_{n+1} - x_n|^{q-1} }\right)^{1/q},
\end{equation}
where the supremum is taken over all finite partitions $\mc{P} = \{x_1 < x_2 < \ldots < x_N\}$ (see, for instance, the discussion in \cite{BaLi} for this object and its generalizations). A classical result of F. Riesz (see \cite[Chapter IX \S4, Theorem 7]{N}) establishes, in the case $1 < q < \infty$, that $\var_q(f) < \infty$ if and only if $f$ is absolutely continuous and its derivative $f'$ belongs to $L^q(\R)$. Moreover, in this case, one has $\|f'\|_{L^q(\R)} = \var_q(f)$.

\smallskip

We now focus on the case of $\R$. Let $0 < \beta < 1$ and $q = 1/(1-\beta)$. Carneiro and Madrid in \cite[Theorem 1]{CMa} show the following: if $f \in BV(\R)$ is such that $\wt{M}_{\beta}f \not\equiv \infty$, then $\wt{M}_{\beta}f$ is absolutely continuous and its derivative satisfies
\begin{equation}\label{Intro_q_bound1}
\big\|\big(\wt{M}_{\beta}f\big)'\big\|_{L^q(\R)} = \var_q\big(\wt{M}_{\beta}f\big) \leq 8^{1/q}\,\var(f).
\end{equation}
Hence the map $f \mapsto \big(\wt{M}_{\beta}f\big)'$ is bounded from $BV(\R)$ to $L^q(\R)$ \footnote{With the understanding that if $\wt{M}_{\beta}f \equiv \infty$ then $\big(\wt{M}_{\beta}f\big)' \equiv 0$.}
 (note that this implies it is also bounded from $W^{1,1}(\R)$ to $L^q(\R)$). Our next result takes us in a different direction than Theorems \ref{Thm1} and \ref{Thm2}, showing that this map is not continuous.
 
\begin{theorem}\label{Thm3}
Let $0 < \beta < 1$ and $q = 1/(1-\beta)$. The map $f \mapsto \big(\wt{M}_{\beta}f\big)'$ is not continuous from $BV(\R)$ to $L^q(\R)$, i.e. there is a sequence $\{f_j\}_{j\geq 1} \subset BV(\R)$ and a function $f \in BV(\R)$ such that $\|f_j - f\|_{BV(\R)} \to 0$ as $j \to \infty$ but 
$$\big\|\big(\wt{M}_{\beta}f_j\big)' - \big(\wt{M}_{\beta}f\big)'\big\|_{L^q(\R)} = \var_q \big(\wt{M}_{\beta}f_j - \wt{M}_{\beta}f\big) \nrightarrow 0$$ 
as $j \to \infty$.
\end{theorem}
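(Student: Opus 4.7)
The plan is to exhibit an explicit counterexample. Take the limit function $f = \chi_{[0,1]}$ and, for each $j \geq 1$, define the approximating sequence
\[ f_j(x) = \chi_{[0,1]}(x) + \frac{1}{j}\, \chi_{[2,\, 2+w_j]}(x), \qquad w_j = 2 j^{1/\beta}. \]
Then $\var(f_j - f) = 2/j \to 0$ and both $f_j$ and $f$ vanish at $-\infty$, so $\|f_j - f\|_{BV(\R)} \to 0$. The mechanism behind the failure of continuity is that, although the perturbation has vanishing total variation, its $L^1$-mass is $w_j/j = 2 j^{1/\beta - 1} \to \infty$, and this mass is detected by $\wt{M}_{\beta}$ through the $|I|^\beta$ factor in its definition.

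The first step is a direct optimization giving the explicit formula
\[ \wt{M}_{\beta} f(x) = \begin{cases} (1-x)^{-(1-\beta)} & \text{if } x \leq 0, \\ 1 & \text{if } 0 \leq x \leq 1, \\ x^{-(1-\beta)} & \text{if } x \geq 1. \end{cases} \]
The second step---and the main technical point---is to show that for every $x$ in the region $(1, 2+w_j)$, the supremum defining $\wt{M}_{\beta} f_j(x)$ is realized by the spanning interval $[0, 2+w_j]$, producing the constant plateau
\[ p_j := \frac{1 + w_j/j}{(2+w_j)^{1-\beta}} \longrightarrow 2^\beta \quad \text{as } j \to \infty. \]
This requires a case-by-case comparison with alternative candidate extremizers (intervals contained in one bump, intervals in the gap, or intervals extending beyond the combined support), and crucially uses $2^\beta > 1 = \max_{x \in (1, 2+w_j)} x^{-(1-\beta)}$ for sufficiently large $j$.

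Once the plateau structure is confirmed, on $(1, 2+w_j)$ we have $(\wt{M}_{\beta} f_j)' \equiv 0$ and $(\wt{M}_{\beta} f)'(x) = -(1-\beta)/x^{2-\beta}$, so
\[ \big(\wt{M}_{\beta} f_j - \wt{M}_{\beta} f\big)'(x) = \frac{1-\beta}{x^{2-\beta}}. \]
The exponent identity $q(2-\beta) = 1 + q$ (equivalent to $q = 1/(1-\beta)$) yields
\[ \int_1^{2+w_j} \left(\frac{1-\beta}{x^{2-\beta}}\right)^q \dx = \frac{(1-\beta)^q}{q} \big[1 - (2+w_j)^{-q}\big] \longrightarrow \frac{(1-\beta)^q}{q} > 0, \]
so $\big\|(\wt{M}_{\beta} f_j - \wt{M}_{\beta} f)'\big\|_{L^q(\R)}$ is uniformly bounded below by a positive constant for all sufficiently large $j$, contradicting continuity. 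The main obstacle is precisely the plateau case analysis in step two; once that is settled, the $L^q$-computation falls out immediately from the exponent identity.
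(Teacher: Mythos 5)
Your construction is correct, and it takes a genuinely different route from the paper's, though the underlying mechanism is the same (a perturbation of height $\sim 1/j$ whose total variation vanishes but whose $L^1$-mass diverges, which the factor $|I|^{\beta}$ in $\wt{M}_{\beta}$ converts into a macroscopic flattening). The paper also starts from $f=\chi_{[0,1]}$, but it overlays the bump on $[0,h_j]$ (height $\tfrac{1}{2j}$), chooses $h_j$ implicitly as a record point of $F_j(s)=s^{\beta-1}\big(1+\tfrac{s}{2j}\big)$, and detects the failure of continuity at just two points: one verifies that $\wt{M}_{\beta}f_j(0)=\wt{M}_{\beta}f_j(2)$ (both realized by the average over $[0,h_j]$), whereas $\wt{M}_{\beta}f(0)=1\neq 2^{\beta-1}=\wt{M}_{\beta}f(2)$, so the two-point partition $\{0,2\}$ in the definition of $\var_q$ already gives a lower bound bounded away from zero. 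That keeps the verification local: only the maximizing intervals at $x=0$ and $x=2$ must be identified. Your version instead places the bump disjointly on $[2,2+w_j]$ with the explicit width $w_j=2j^{1/\beta}$ calibrated so that the spanning interval $[0,2+w_j]$ wins, and then integrates the derivative discrepancy over $(1,2+w_j)$; this buys an explicit limiting lower bound $(1-\beta)/q^{1/q}$ and avoids the implicit record-point construction, but it costs you the global plateau claim (that $[0,2+w_j]$ is optimal for \emph{every} $x\in(1,2+w_j)$), which is the heaviest step and is only sketched. The claim is in fact true for large $j$: enlarging past $[0,2+w_j]$ or discarding either bump only lowers the average, the second bump alone gives exactly $2^{\beta}$, and $p_j>2^{\beta}$ once $w_j^{\beta}>2^{1+\beta}(1-\beta)$; so the argument is completable. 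Note, though, that you could bypass the plateau analysis entirely in the spirit of the paper: with your $f_j$ it suffices to check that the spanning interval is optimal at the two points $x=0$ and $x=2$, giving $\wt{M}_{\beta}f_j(0)=\wt{M}_{\beta}f_j(2)=p_j$ while $\wt{M}_{\beta}f(0)=1\neq 2^{\beta-1}=\wt{M}_{\beta}f(2)$, and then the two-point Riesz quotient in $\var_q$ finishes the proof without any statement about the behavior of $\wt{M}_{\beta}f_j$ on the rest of the interval.
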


It is interesting to investigate the situation when one considers a stronger form of convergence on the source space. 

\smallskip

\noindent{\bf Question E.} Let $0 < \beta < 1$ and $q = 1/(1-\beta)$. Is the map $f \mapsto \big(\wt{M}_{\beta}f\big)'$ continuous from $W^{1,1}(\R)$ to $L^q(\R)$?

\smallskip

In the centered case, the boundedness of the map $f \mapsto \big(M_{\beta}f\big)'$ from $BV(\R)$ to $L^q(\R)$ (or even from $W^{1,1}(\R)$ to $L^q(\R)$) is still unknown. Independent of the boundedness result, our counterexamples also work to yield the following negative continuity result.

\begin{theorem}\label{Thm4}
Let $0 < \beta < 1$ and $q = 1/(1-\beta)$. There is a sequence $\{f_j\}_{j\geq 1} \subset BV(\R)$ and a function $f \in BV(\R)$ such that $\|f_j - f\|_{BV(\R)} \to 0$ as $j \to \infty$ but $\var_q \big(M_{\beta}f_j - M_{\beta}f\big) \nrightarrow 0$ as $j \to \infty$.
\end{theorem}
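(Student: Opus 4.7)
The plan is to reuse the very same sequence $\{f_j\}_{j \geq 1}$ and limit $f \in BV(\R)$ that prove Theorem~\ref{Thm3}, and to verify that this pair also witnesses the discontinuity of $f \mapsto (M_\beta f)'$ for the centered operator. The convergence $\|f_j - f\|_{BV(\R)} \to 0$ is a property purely of the source space and requires no fresh verification; the entire task is thus to reprove the lower bound on $\var_q$ with $M_\beta$ replacing $\wt{M}_\beta$.

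The strategy is to localize the analysis to the unbounded region $\Omega \subset \R$ on which the uncentered lower bound in Theorem~\ref{Thm3} was extracted --- typically a half-line lying beyond the supports of $f$ and the $f_j$. Anticipating that the test functions used for Theorem~\ref{Thm3} are built from indicators of intervals (the natural class for constructing such counterexamples), both the centered and uncentered optimal balls on $\Omega$ end up anchored at the same extremal endpoint of $\supp f$. Explicitly, for a building block $h = \chi_{[b,a]}$ and a point $x > a$, an elementary calculus optimization over the radius $r$ shows the centered maximum is attained at $r = x - b$, giving
\begin{equation*}
M_\beta h(x) = \frac{a-b}{\bigl(2(x-b)\bigr)^{1-\beta}},
\qquad
\wt{M}_\beta h(x) = \frac{a-b}{(x-b)^{1-\beta}},
\end{equation*}
so that $M_\beta h = 2^{\beta-1}\,\wt{M}_\beta h$ on $\Omega$. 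Applying this identity to each of $f$ and $f_j$ separately (and to any finite superposition of translates and dilates, the typical structure of the counterexample), subtraction gives $M_\beta f_j - M_\beta f = 2^{\beta-1}(\wt{M}_\beta f_j - \wt{M}_\beta f)$ on $\Omega$. Since $\var_q$ is a positively homogeneous seminorm, the lower bound on $\var_q(\wt{M}_\beta f_j - \wt{M}_\beta f)$ from Theorem~\ref{Thm3} (already localized on $\Omega$ by the construction) transfers at once to $\var_q(M_\beta f_j - M_\beta f)$, losing only the harmless factor $2^{\beta-1}$.

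The main obstacle is justifying rigorously that the centered optimizer on $\Omega$ is indeed the ball with left endpoint at the leftmost singular point of $\supp f$. Unlike the uncentered operator, the centered ball cannot translate to exploit an asymmetric mass distribution, so one must rule out that this rigidity spawns a structurally different optimizer --- for instance, a smaller ball capturing only a tail of the support, or a ball anchored at a different interior component of $\supp f$. This amounts to a direct calculus verification on the specific piecewise-constant test data; one has to track which of the intervals $[b_k, a_k]$ comprising $\supp f$ actually dominates, but the argument should go through cleanly. A secondary subtlety is that it remains an open problem whether $(M_\beta f)' \in L^q(\R)$ for an arbitrary $f \in BV(\R)$; nevertheless, the closed-form expressions above show that, for the explicit test functions at hand, $M_\beta f$ and $M_\beta f_j$ are absolutely continuous on $\Omega$ with $L^q$-derivatives, so no extra regularity input is needed.
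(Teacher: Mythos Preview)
Your reduction hinges on the assumption that the lower bound in Theorem~\ref{Thm3} is extracted on a half-line $\Omega$ lying \emph{beyond} the supports of $f$ and of every $f_j$. This is not the case. In the paper's construction one takes $f = \chi_{[0,1]}$ and $f_j = \chi_{[0,1]} + \tfrac{1}{2j}\chi_{[0,h_j]}$ with $h_j \to \infty$, and the $q$-variation is tested at the two points $x=0$ and $x=2$, both of which lie \emph{inside} $\supp f_j = [0,h_j]$. There is in fact no half-line to the right of all the supports (since $h_j \to \infty$), and on the only uniform exterior region $(-\infty,0)$ the mechanism of Theorem~\ref{Thm3} --- namely that $\wt{M}_\beta f_j$ is exactly constant on $[0,2]$ while $\wt{M}_\beta f$ is not --- is simply unavailable. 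Your proportionality identity $M_\beta g = 2^{\beta-1}\wt{M}_\beta g$ is correct for $x$ strictly outside $\supp g$ (and, as you note, for general nonnegative compactly supported $g$, not just single indicators), but it fails at the actual test point $x=2$: one computes $M_\beta f_j(2) = (1 + h_j/(2j))\,(2h_j-4)^{\beta-1}$ whereas $2^{\beta-1}\wt{M}_\beta f_j(2) = (1+h_j/(2j))\,(2h_j)^{\beta-1}$, and these differ.

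The paper does not attempt any reduction to the uncentered case. Instead it computes $M_\beta f$ and $M_\beta f_j$ directly at $x=0$ and $x=2$: one has $M_\beta f(0) = 2^{\beta-1}$, $M_\beta f(2) = 4^{\beta-1}$, so $M_\beta f(2) - M_\beta f(0) \neq 0$; and after choosing $h_j$ so that the centered optimizers at $0$ and $2$ are the balls of radii $h_j$ and $h_j-2$ respectively, one finds
\[
M_\beta f_j(2) - M_\beta f_j(0) = \Big(1 + \tfrac{h_j}{2j}\Big)\Big((2h_j-4)^{\beta-1} - (2h_j)^{\beta-1}\Big) \sim \Big(1+\tfrac{h_j}{2j}\Big)h_j^{\beta-2} \to 0.
\]
Thus the centered difference is not exactly zero (unlike the uncentered case) but is asymptotically negligible, which is enough. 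Your proposal would need either a new pair of test points in $(-\infty,0)$ together with a fresh lower-bound argument there, or else to abandon the proportionality shortcut and carry out this direct computation.
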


At this moment we are not able to produce counterexamples assuming convergence in the more regular space $W^{1,1}(\R)$, and we leave it as the next open problem.

\smallskip

\noindent{\bf Question F.} Let $0 < \beta < 1$ and $q = 1/(1-\beta)$. Is the map $f \mapsto \big(M_{\beta}f\big)'$ bounded and continuous from $W^{1,1}(\R)$ to $L^q(\R)$?

\subsection{Discrete fractional operators}
The endpoint regularity results of \cite{CMa} include also the discrete fractional analogues. In this setting, for $1\leq q < \infty$, we define the $q$-variation of a discrete function $f :\Z\to \R$ by 
\begin{equation*}
\var_q(f) := \left(\sum_{n=-\infty}^{\infty} |f(n+1) - f(n)|^q\right)^{1/q} = \|f'\|_{\ell^q(\Z)}.
\end{equation*}
For $0 \leq \beta < 1$ and $f :\Z\to \R$, we define the one-dimensional discrete uncentered fractional maximal operator by
\begin{equation}\label{Intro_Def_Disc_Max_oper_dim1}
\wt{\M}_{\beta}f(n) = \sup_{\stackrel{r,s \geq 0}{r,s \in \Z}} \,\frac{1}{(r + s+1)^{1 - \beta}} \sum_{k = -r}^{s} |f(n +k)|.
\end{equation}
If $\beta =0$ we recover \eqref{disc_HLM}. Now let $0\leq \beta < 1$ and $q = 1/(1-\beta)$. The following result is proved in \cite[Theorem 2]{CMa}: if $f \in BV(\Z)$ is such that $\wt{\M}_{\beta}f \not\equiv \infty$ then 
\begin{equation}\label{Intro_bound_disc_frac_HL}
\big\|\big(\wt{\M}_{\beta}f\big)'\big\|_{\ell^q(\Z)} = \var_q \big(\wt{\M}_{\beta}f\big)\leq 4^{1/q} \,\var(f).
\end{equation}
Regarding the continuity of this map we present the following negative result.
\begin{theorem}\label{Thm5}
Let $0 < \beta < 1$ and $q = 1/(1-\beta)$. The map $f \mapsto \big(\wt{\M}_{\beta}f\big)'$ is not continuous from $BV(\Z)$ to $\ell^q(\Z)$, i.e. there is a sequence $\{f_j\}_{j\geq 1} \subset BV(\Z)$ and a function $f \in BV(\Z)$ such that $\|f_j - f\|_{BV(\Z)} \to 0$ as $j \to \infty$ but 
$$\big\|\big(\wt{\M}_{\beta}f_j\big)' - \big(\wt{\M}_{\beta}f\big)'\big\|_{\ell^q(\Z)} = \var_q \big(\wt{\M}_{\beta}f_j - \wt{\M}_{\beta}f\big) \nrightarrow 0$$ 
as $j \to \infty$.
\end{theorem}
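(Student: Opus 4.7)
The plan is to construct an explicit counterexample that exploits the tacit restriction $\widetilde{\M}_\beta f \not\equiv \infty$ attached to the boundedness estimate \eqref{Intro_bound_disc_frac_HL}. I will build a sequence $f_j$ converging to $f$ in $BV(\Z)$ for which $\widetilde{\M}_\beta f_j \equiv \infty$, so that under the convention stated in the footnote to \eqref{Intro_q_bound1} (namely $(\widetilde{\M}_\beta f_j)' \equiv 0$), the derivative of the maximal function is forced to jump to zero, while the limit derivative $(\widetilde{\M}_\beta f)'$ is nonzero in $\ell^q(\Z)$.

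Take $f = \chi_{\{0\}}$. A direct computation shows that the supremum in \eqref{Intro_Def_Disc_Max_oper_dim1} is realized by the interval $[0,n]$ (for $n\geq 0$) or $[n,0]$ (for $n\leq 0$), so $\widetilde{\M}_\beta f(n) = (|n|+1)^{\beta-1}$; the resulting first differences behave like $(|n|+1)^{\beta-2}$, and since $(2-\beta)q = 1+q$ they are $\ell^q$-summable, giving $\var_q(\widetilde{\M}_\beta f) = \|(\widetilde{\M}_\beta f)'\|_{\ell^q(\Z)} =: \delta_0 > 0$. For each $j \geq 1$ I set $f_j(n) = \chi_{\{0\}}(n) + 1/j$, that is, I add the constant $1/j$ at every integer. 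Then $f_j \in BV(\Z)$ with $f_j(-\infty) = 1/j$ and $\var(f_j) = 2$; the difference $f_j - f$ is the constant function $1/j$, hence
\begin{equation*}
\|f_j - f\|_{BV(\Z)} = \bigl|(f_j-f)(-\infty)\bigr| + \var(f_j-f) = \tfrac{1}{j} \longrightarrow 0.
\end{equation*}

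On the other hand, for every $n \in \Z$ and every $r,s \geq 0$ the discrete fractional average satisfies
\begin{equation*}
\frac{1}{(r+s+1)^{1-\beta}} \sum_{k=-r}^{s} |f_j(n+k)| \;\geq\; \frac{r+s+1}{j\,(r+s+1)^{1-\beta}} \;=\; \frac{(r+s+1)^{\beta}}{j},
\end{equation*}
which diverges as $r+s \to \infty$ since $\beta > 0$. Hence $\widetilde{\M}_\beta f_j \equiv \infty$ and, by the convention recalled above, $(\widetilde{\M}_\beta f_j)' \equiv 0$. It follows that for every $j$,
\begin{equation*}
\bigl\|(\widetilde{\M}_\beta f_j)' - (\widetilde{\M}_\beta f)'\bigr\|_{\ell^q(\Z)} = \bigl\|(\widetilde{\M}_\beta f)'\bigr\|_{\ell^q(\Z)} = \delta_0 > 0,
\end{equation*}
and the expression $\var_q(\widetilde{\M}_\beta f_j - \widetilde{\M}_\beta f)$, interpreted through the same convention on derivatives (so that it equals $\|-(\widetilde{\M}_\beta f)'\|_{\ell^q(\Z)}$), yields the same constant. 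The sequence therefore does not tend to zero, establishing Theorem~\ref{Thm5}. No delicate estimate is required; the only subtle point is the consistent use of the convention once $\widetilde{\M}_\beta f_j$ becomes identically infinite, which is immediate from the definitions.
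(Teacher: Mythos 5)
Your construction does not prove the theorem as stated, because it collapses into a degenerate case where the quantities in the statement are no longer well defined. With $f_j=\chi_{\{0\}}+1/j$ you indeed get $\|f_j-f\|_{BV(\Z)}=1/j\to 0$ and $\wt{\M}_{\beta}f_j\equiv\infty$, but then $\wt{\M}_{\beta}f_j-\wt{\M}_{\beta}f$ is identically $+\infty$ and its consecutive differences are of the form $\infty-\infty$; the quantity $\var_q\big(\wt{\M}_{\beta}f_j-\wt{\M}_{\beta}f\big)$, and the identity $\big\|(\wt{\M}_{\beta}f_j)'-(\wt{\M}_{\beta}f)'\big\|_{\ell^q(\Z)}=\var_q\big(\wt{\M}_{\beta}f_j-\wt{\M}_{\beta}f\big)$ appearing in the theorem, simply have no meaning for this sequence. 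The footnote convention ``$\wt{M}_{\beta}f\equiv\infty \Rightarrow (\wt{M}_{\beta}f)'\equiv 0$'' was introduced only so that the boundedness estimate could be stated for all of $BV$; declaring the derivative to be $0$ by fiat and then exhibiting a ``discontinuity'' produced entirely by that bookkeeping choice does not show that the operator itself fails to be continuous -- with any other reasonable convention (e.g.\ excluding such $f_j$ from the domain) your example evaporates. The content of the theorem is that continuity fails even in the nondegenerate regime where all the maximal functions are finite real-valued and the bound \eqref{Intro_bound_disc_frac_HL} genuinely applies.

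The good news is that your underlying idea (perturb the delta by a small, very long plateau so as to flatten the maximal function near the origin) is exactly the right one; it only needs to be truncated so that $\wt{\M}_{\beta}f_j$ stays finite. This is what the paper does: with $f=\chi_{\{0\}}$ one has $\big(\wt{\M}_{\beta}f\big)'(0)=2^{\beta-1}-1\neq 0$, and one sets $f_j=f+\tfrac{1}{2j}\chi_{[0,h_j]}$ with $h_j$ a finite integer chosen so large (depending on $j$, using that $s\mapsto (s+1)^{\beta-1}\big(1+\tfrac{s+1}{2j}\big)\to\infty$) that the optimal averages at both $n=0$ and $n=1$ are taken over the full interval $[0,h_j]$; this forces $\wt{\M}_{\beta}f_j(1)=\wt{\M}_{\beta}f_j(0)$, hence $\big(\wt{\M}_{\beta}f_j\big)'(0)=0$ for every $j$, while $\|f_j-f\|_{BV(\Z)}=1/j\to 0$. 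Then the difference of derivatives at $n=0$ stays bounded away from $0$, and all quantities involved are finite, so the non-convergence in $\ell^q(\Z)$ is legitimately established. You should replace the constant perturbation by this truncated one and supply the (short) verification that the supremum at $n=0,1$ is attained on $[0,h_j]$.
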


If we replace \eqref{Intro_Def_Disc_Max_oper_dim1} by its centered version $\M_{\beta}f$, the analogue of the bound \eqref{Intro_bound_disc_frac_HL} is still unknown. In spite of that, we can construct sequences that would violate a potential continuity.

\begin{theorem}\label{Thm6}
Let $0 < \beta < 1$ and $q = 1/(1-\beta)$. There is a sequence $\{f_j\}_{j\geq 1} \subset BV(\Z)$ and a function $f \in BV(\Z)$ such that $\|f_j - f\|_{BV(\Z)} \to 0$ as $j \to \infty$ but $\var_q \big(\M_{\beta}f_j - \M_{\beta}f\big) \nrightarrow 0$ as $j \to \infty$.
\end{theorem}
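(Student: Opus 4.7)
The plan is to construct an explicit counterexample sequence $\{f_j\}_{j\geq 1} \subset BV(\Z)$ converging to some $f \in BV(\Z)$, for which $\var_q(\M_\beta f_j - \M_\beta f) \not\to 0$. This construction closely mirrors the one used in the proof of Theorem~\ref{Thm5} for the uncentered operator $\wt{\M}_\beta$: the key mechanism there, a small-in-$BV$ perturbation that produces a persistent bump in the fractional maximal over a wide interval driven by the slow decay of the fractional averaging kernel, should carry over to the centered setting with only minor quantitative adjustments. The centered and uncentered averages of a single atom differ only by a factor of two in the ball radius, so the scaling behavior responsible for the failure of continuity is shared.

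Concretely, I would take a base function with simple support, for example $f = \chi_{\{0\}}$, for which $\M_\beta f(n) = (2|n|+1)^{-(1-\beta)}$ since the smallest centered ball around $n$ containing the origin has radius $|n|$. This belongs to $BV(\Z)$ with finite $q$-variation because $(\M_\beta f)'(n) \sim n^{\beta-2}$ and $q(2-\beta) = (2-\beta)/(1-\beta) > 1$. For the perturbed sequence I would set $f_j = f + \epsilon_j\, g(\cdot - N_j)$, where $g \in BV(\Z)$ is a fixed finitely supported bump and the parameters $\epsilon_j \downarrow 0$, $N_j \uparrow \infty$ are to be tuned. The $BV(\Z)$-distance is then $\|f_j - f\|_{BV(\Z)} = \epsilon_j \|g\|_{BV(\Z)} \to 0$. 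The analysis of $\M_\beta f_j - \M_\beta f$ proceeds by identifying the window of $n$-values in which the optimal centered ball for $f_j$ is one enclosing the translated bump (and possibly also the origin), and comparing to the optimal centered ball for $f$ that encloses only the origin. On this window the difference $\M_\beta f_j - \M_\beta f$ takes a bump-shaped profile whose amplitude, width, and slope depend explicitly on $\epsilon_j$ and $N_j$, and can be estimated directly from the formulas for candidate averages.

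The main obstacle, and the heart of the construction, is the delicate parameter tuning. One must balance $\epsilon_j \to 0$ (required for $BV$-convergence) against a lower bound on $\var_q(\M_\beta f_j - \M_\beta f)$ (required for the counterexample). The fractional exponent $\beta > 0$ is essential here: it makes the averaging kernel $(2|n|+1)^{-(1-\beta)}$ decay slowly enough that a far-away perturbation can produce a long-range effect, and the $q$-variation of the resulting bump can be bounded below by a positive constant independent of $j$ provided that $N_j$ grows fast enough relative to $\epsilon_j^{-1}$. This parameter tuning is precisely what makes the uncentered counterexample of Theorem~\ref{Thm5} succeed, and the same tuning, after a factor-of-two adjustment to account for the centered ball geometry and a verification that the centered optimum at each $n$ in the relevant window is correctly identified, is expected to yield the desired conclusion. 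The analogous carry-over from Theorem~\ref{Thm3} (uncentered) to Theorem~\ref{Thm4} (centered) in the continuous setting, already recorded by the authors, gives further confidence in this plan.
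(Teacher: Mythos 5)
There is a genuine gap: the construction you propose cannot produce a counterexample, no matter how the parameters are tuned. Your perturbation $f_j = f + \epsilon_j\, g(\cdot - N_j)$, with $g$ a \emph{fixed} finitely supported bump and $\epsilon_j \to 0$, satisfies $\|f_j - f\|_{\ell^1(\Z)} = \epsilon_j \|g\|_{\ell^1(\Z)} \to 0$; but the map $f \mapsto \big(\M_{\beta}f\big)'$ is continuous from $\ell^1(\Z)$ to $\ell^q(\Z)$ (this is \cite[Theorem 3]{CMa}, quoted in the introduction and in Table \ref{Table-ECP}), so $\var_q\big(\M_{\beta}f_j - \M_{\beta}f\big) = \big\|\big(\M_{\beta}f_j\big)' - \big(\M_{\beta}f\big)'\big\|_{\ell^q(\Z)} \to 0$ automatically. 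One can also see this by hand for $g=\chi_{\{0\}}$: near $N_j$ the difference $\M_{\beta}f_j - \M_{\beta}f$ equals $\epsilon_j \M_{\beta}g(\cdot - N_j)$ up to interaction terms that only \emph{shrink} as $N_j$ grows, and since $\var_q$ is positively homogeneous its $q$-variation is $\epsilon_j \var_q(\M_{\beta}g) + o(1) \to 0$. Thus your central claim --- that the $q$-variation of the resulting bump stays bounded below provided $N_j$ grows fast enough relative to $\epsilon_j^{-1}$ --- is false; increasing $N_j$ helps nothing. Any genuine counterexample must converge in $BV(\Z)$ while \emph{not} converging in $\ell^1(\Z)$, and you have also misread the mechanism of Theorem \ref{Thm5}: there the perturbation is not a small far-away bump but a plateau attached at the origin.

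That is exactly what the paper does for Theorem \ref{Thm6}. Keeping $f$ the delta at the origin, it sets $f_j = f + \tfrac{1}{2j}\chi_{[0,h_j]}$ as in \eqref{def_f_j_counter_Thm5}: height $\tfrac{1}{2j}$ (so $\|f_j-f\|_{BV(\Z)} = \tfrac1j \to 0$) but enormous width $h_j$ (so the $\ell^1$ mass $\tfrac{h_j+1}{2j}$ blows up). The width is chosen, via monotonicity properties of the auxiliary functions $F_j, G_j$, so that the optimizing centered radii at $n=0$ and $n=1$ are $h_j$ and $h_j-1$ respectively, as in \eqref{cond_pf_Thm6}; this flattens the maximal function at the origin, giving
\begin{equation*}
\big(\M_{\beta}f_j\big)'(0) = \Big(1 + \tfrac{h_j+1}{2j}\Big)\Big((2h_j-1)^{\beta-1} - (2h_j+1)^{\beta-1}\Big) \to 0,
\end{equation*}
whereas $\big(\M_{\beta}f\big)'(0) = 3^{\beta-1}-1 \neq 0$ is a fixed constant. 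The failure of continuity is then detected at the single point $n=0$ through the trivial lower bound $\var_q\big(\M_{\beta}f_j - \M_{\beta}f\big) \geq \big|\big(\M_{\beta}f_j\big)'(0) - \big(\M_{\beta}f\big)'(0)\big|$, rather than on a window near a distant translate. Your proposal is missing precisely this idea (a $BV$-small but $\ell^1$-large perturbation that relocates the optimal radii at a fixed point), and without it the argument fails.
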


In parallel to Questions E and F about the boundedness and continuity in $W^{1,1}(\R)$ (whose discrete analogue is $\ell^1(\Z)$), if $0 < \beta < 1$ and $q = 1/(1-\beta)$ we remark that the maps $f \mapsto \big(\wt{\M}_{\beta}f\big)'$ and $f \mapsto \big(\M_{\beta}f\big)'$ are bounded and continuous from $\ell^1(\Z)$ to $\ell^q(\Z)$ as shown in \cite[Theorem 3]{CMa}.

\subsection{Summary} Throughout this introduction we have described a set of sixteen problems related to the endpoint continuity ($BV$ and $W^{1,1}$) for different versions of the one-dimensional Hardy-Littlewood maximal operator. Recall that these sixteen problems arise from the following pairs of possibilities: (i) centered vs. uncentered maximal operator; (ii) classical vs. fractional maximal operator; (iii) continuous vs. discrete setting; (iv) $W^{1,1}$ vs. $BV$ continuity.

\smallskip

Table 1 below summarizes the results of this paper and the open problems. The word YES in a box means that we have established the continuity of the corresponding map, whereas the word NO means that we have shown it fails. The remaining boxes are marked as OPEN problems.

\begin{table}[h]
\renewcommand{\arraystretch}{1.3}
\centering
\caption{Endpoint continuity program}
\label{Table-ECP}
\begin{tabular}{|c|c|c|c|c|}
\hline
 \raisebox{-1.3\height}{------------}&  \parbox[t]{2.8cm}{  $W^{1,1}-$continuity; \\ continuous setting} &  \parbox[t]{2.8cm}{  $BV-$continuity; \\ continuous setting} & \parbox[t]{2.6cm}{  $W^{1,1}-$continuity; \\ discrete setting} &  \parbox[t]{2.5cm}{  $BV-$continuity; \\ discrete setting} \\ [0.5cm]
\hline
 \parbox[t]{3.3cm}{ Centered classical \\ maximal operator} &  \raisebox{-0.8\height}{OPEN: Question A} & \raisebox{-0.8\height}{OPEN: Question C} & \raisebox{-0.8\height}{YES$^2$}  &\raisebox{-0.8\height}{OPEN: Question D}\\[0.5cm]
 \hline
  \parbox[t]{3.3cm}{ Uncentered classical \\ maximal operator} &  \raisebox{-0.8\height}{YES: Thm 1} &  \raisebox{-0.8\height}{OPEN: Question B} &  \raisebox{-0.8\height}{YES$^2$} &  \raisebox{-0.8\height}{YES: Thm 2} \\[0.5cm]
 \hline
 \parbox[t]{3.3cm}{ Centered fractional \\ maximal operator} &  \raisebox{-0.8\height}{OPEN$^1$: Question F} &  \raisebox{-0.8\height}{NO$^1$: Thm 4} &  \raisebox{-0.8\height}{YES$^3$} & \raisebox{-0.8\height}{NO$^1$: Thm 6}  \\[0.5cm]
 \hline
\parbox[t]{3.3cm}{ Uncentered fractional \\ maximal operator} &  \raisebox{-0.8\height}{OPEN: Question E} &  \raisebox{-0.8\height}{NO: Thm 3} &  \raisebox{-0.8\height}{YES$^3$}  & \raisebox{-0.8\height}{NO: Thm 5} \\[0.5cm]
 \hline
\end{tabular}
\vspace{0.05cm}
\flushleft{
\ \ \footnotesize{$^1$ Corresponding boundedness result not yet known.}\\
\ \ $^2$ Result previously obtained in \cite[Theorem 1]{CH}.\\
\ \ $^3$ Result previously obtained in \cite[Theorem 3]{CMa}.}
\end{table}

\section{$BV-$continuity in the discrete setting: proof of Theorem \ref{Thm2}}

We opt to consider the discrete cases first, since they describe the essence of the main ideas with fewer technicalities than the continuous cases. Throughout this section we denote the variation of $g:\Z \to \R$ over an interval $[a,b]$, with $a,b \in \Z \cup \{\pm \infty\}$, by
$$\var_{[a,b]}(g) :=\sum_{n = a}^{b-1} |g(n+1) - g(n)|.$$

\subsection{Preliminary lemmas}\label{sec_prelim_discrete}
We start by introducing the local maxima and minima of a discrete function $g:\Z \to \R$. We say that an interval $[m,n]$ is a {\it string of local maxima} of $g$ if 
$$g(m-1) < g(m) = \ldots = g(n) > g(n+1).$$
Here we allow the possibility $m=n$. We also allow the possibility of having $m = -\infty$ or $n = \infty$ (but not both simultaneously), and we modify the definition accordingly, eliminating one of the inequalities. The rightmost point $n$ (when $n \neq \infty$) of such a string is a {\it right local maximum} of $g$, while the leftmost point $m$ (when $m \neq -\infty$) is a {\it left local maximum} of $g$. We define {\it string of local minima}, {\it right local minimum} and {\it left local minimum} analogously. 

\smallskip

For $f \in BV(\Z)$ we let $\{[a_j^-,a_j^+]\}_{j \in \Z}$ and $\{[b_j^-,b_j^+]\}_{j \in \Z}$ be the ordered strings of local maxima and local minima of $\wt{\M}f$ (assuming $\wt{\M}f$ is non-constant), i.e. 
\begin{equation}\label{Sec2_Prel_lemmas_sequence_max_min}
\ldots < a_{-1}^- \leq a_{-1}^+ < b_{-1}^- \leq b_{-1}^+ < a_0^- \leq a_0^+ < b_0^-\leq b_0^+ < a_1^- \leq a_1^+ < b_1^- \leq b_1^+ < \ldots
\end{equation}

\begin{lemma}\label{lemma_min_max} If $\lim_{n\to -\infty}|f(n)| = a$ and $\lim_{n\to \infty}|f(n)| = b$, and $c = \max\{a,b\}$, then $\wt{\M}f(n) \geq c$ for all $n \in \Z$ and $\lim_{n \to \pm \infty} \wt{\M}f (n)= c$. 
In particular, if $\wt{\M}f$ is non-constant, this implies that for every string of local maxima $[a_j^-,a_j^+]$ both $a_j^-$ and $a_j^+$ are finite.
\end{lemma}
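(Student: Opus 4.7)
The plan is to prove the three assertions in sequence: (i) the pointwise lower bound $\wt{\M}f(n) \geq c$ for every $n \in \Z$; (ii) the convergence $\wt{\M}f(n) \to c$ as $n \to \pm\infty$; and (iii) the finiteness of the endpoints $a_j^{\pm}$.

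For (i), I fix $n \in \Z$ and $\varepsilon > 0$. Using $\lim_{k \to +\infty}|f(k)| = b$, I choose $N \geq n$ so that $|f(k)| > b - \varepsilon$ for all $k \geq N$. Testing the one-sided interval $[n, n+s]$ in the supremum \eqref{disc_HLM} and letting $s \to \infty$ yields $\wt{\M}f(n) \geq b - \varepsilon$, and the symmetric construction extending far to the left gives $\wt{\M}f(n) \geq a - \varepsilon$. Letting $\varepsilon \to 0$ produces $\wt{\M}f(n) \geq c$.

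For (ii), by symmetry I treat only $n \to +\infty$. Fix $\varepsilon > 0$, pick $N$ such that $|f(k)| < a+\varepsilon$ for $k < -N$ and $|f(k)| < b+\varepsilon$ for $k > N$, and set $K := \|f\|_{\ell^\infty(\Z)} < \infty$, using that $f \in BV(\Z)$ is bounded. For $n > N$ and any admissible interval $[n-r, n+s] \ni n$ as in \eqref{disc_HLM}, I decompose the sum $\sum_{k=n-r}^{n+s}|f(k)|$ across the three regions $(-\infty,-N-1]$, $[-N,N]$, and $[N+1,+\infty)$. The middle block contributes at most the fixed constant $K(2N+1)$, while the outer blocks contribute at most $(a+\varepsilon)$ and $(b+\varepsilon)$ times the number of their respective terms. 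A short case analysis on where $n-r$ sits relative to $\pm N$ — using the crucial observation that in every case the total length $r+s+1$ grows at least like $n - N$ — shows that the normalized average is bounded by
\[
c + \varepsilon + \frac{K(2N+1)}{n-N}.
\]
Taking the supremum over $r,s \geq 0$ and then $n \to +\infty$ followed by $\varepsilon \to 0$ yields $\limsup_{n\to+\infty} \wt{\M}f(n) \leq c$, which combined with (i) gives the limit. The technical heart of the argument lies precisely in this three-region bookkeeping: the delicate case is when $[n-r, n+s]$ reaches deep into $\{k < -N\}$, since then the left tail (bounded only by $a$) is aggregated with the right tail (bounded by $b$); the point is that any convex-combination-type bound of $a+\varepsilon$ and $b+\varepsilon$ is still bounded by $c + \varepsilon$, while the middle block's contribution becomes negligible as $n \to +\infty$.

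Finally, (iii) is a direct consequence of (i) and (ii). Indeed, if a string of local maxima of a non-constant $\wt{\M}f$ had $a_j^+ = +\infty$, then by the modified definition $\wt{\M}f$ would equal some constant $v$ on $[a_j^-, +\infty)$ with $v > \wt{\M}f(a_j^- -1)$; by (ii) this constant must satisfy $v = c$, but then (i) applied at $a_j^- - 1$ gives $\wt{\M}f(a_j^- -1) \geq c = v$, contradicting the strict inequality in the definition of a string. The case $a_j^- = -\infty$ is ruled out symmetrically.
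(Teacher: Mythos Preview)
Your proof is correct and follows essentially the same strategy as the paper for parts (i) and (iii). For part (ii) there is a small methodological difference worth noting: the paper argues by contradiction (assuming a sequence $n_j$ with $\wt{\M}f(n_j) \geq c+\delta$ and showing no admissible interval can achieve such an average once $n_j$ is far from a fixed window $[x,y]$), whereas you give a direct quantitative upper bound $\wt{\M}f(n) \leq c+\varepsilon + K(2N+1)/(n-N)$ for all $n>N$. Both arguments rest on the same three-region decomposition (left tail / bounded middle block / right tail) and on the observation that the middle block's contribution becomes negligible as $n$ moves away; your direct version is slightly more explicit, the paper's contradiction slightly cleaner. One minor phrasing issue: your claim that ``in every case the total length $r+s+1$ grows at least like $n-N$'' is literally false when $n-r>N$ (the interval could have length $1$), but in that case the middle block is empty and the average is already $\leq b+\varepsilon \leq c+\varepsilon$, so your stated bound still holds and the argument goes through.
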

 We may still have $b_j^- = -\infty$ or $b_j^+ = \infty$ for some $j \in \Z$ in \eqref{Sec2_Prel_lemmas_sequence_max_min}.

\begin{proof} 
For the first statement, if without loss of generality we assume that $c=b$, then for any fixed $n$ we have $\wt{\M}f(n) \geq (r+1)^{-1} \sum_{0 \leq k \leq r} |f(n+k)|$, and by choosing $r$ arbitrarily large we may make the averages arbitrarily close to $c$, so that $\wt{\M}f(n) \geq c$. To show that in the limit $\wt{\M}f(n) = c$, we suppose on the contrary that there exists an infinite sequence $n_j$ such that $\wt{\M}f(n_j) \geq c+\delta$ for some fixed $\delta>0$. Now fix $x,y$ such that 
\begin{equation}\label{comparison}
\text{ for all $n \in (-\infty ,x]$, $\big||f(n)| - a\big| \leq \delta/8$ and for all $n \in [y,\infty)$, $\big||f(n)| - b\big| \leq \delta/8$.}
\end{equation}
 We may then choose $j_0$ sufficiently large that for each $j \geq j_0$, $n_j \not\in[x,y]$ and moreover is sufficiently distant from $[x,y]$ (relative to $\|f\|_{\ell^\infty}$) that for every interval $[n_j-r,n_j+s]$,
\begin{equation}\label{cap_small}
\frac{1}{(r+s+1)} \sum_{u \in [x,y] \cap [n_j-r,n_j+s]} |f(u)|  \leq \delta/4.
\end{equation}
Then for $j \geq j_0$, the statement $\wt{\M}f(n_j) \geq c+\delta$ would imply that there exist $r,s$ such that 
\[ c+\delta/2 \leq \frac{1}{(r+s+1)} \sum_{u \in [n_j-r,n_j+s]} \!\!\! \!\!  \!\!|f(u)|  = \frac{1}{(r+s+1)} \sum_{u \in [x,y] \cap [n_j-r,n_j+s]} \!\!\!\!\! \!\!|f(u)|
	\,+\,\frac{1}{(r+s+1)} \sum_{u \in [n_j-r,n_j+s] \setminus [x,y]} \!\!\!\!\! \!\! |f(u)| .\]
By (\ref{cap_small}) this shows that
\[\frac{1}{(r+s+1)} \sum_{u \in [n_j-r,n_j+s] \setminus [x,y]} |f(u)| \geq c+\delta/4\,,\]
which is not possible, since for all $u$ in this sum, $|f(u)| \leq c+\delta/8$, by (\ref{comparison}). Thus we may conclude that $\lim_{n \rightarrow \pm\infty} \wt{\M}f(n)=c$.	

\smallskip

To prove the final statement in the lemma, if on the contrary we had (for example) $a_j^+=\infty$, then upon setting $c =\wt{\M}f(a_j^-)$ we would have $\wt{\M}f(n)=c$ for all $n \geq a_j^-$, so that in the notation above $c=\max\{a,b\}$, and by the first part of the lemma, $\wt{\M}f(n) \geq c$ for all $n$ and $\lim_{n \rightarrow \pm \infty} \wt{\M}f(n) =c$. Then either $a_j^-=-\infty$ in which case $\wt{\M}$ is constant, or $a_j^{-}$ is finite and hence $\wt{\M}f(a_j^- -1) < \wt{\M}f(a_j^-)$. But this cannot happen, since $\wt{\M}f(n) \geq c$ for all $n$. 
  The argument to rule out $a_j^-=-\infty$ is similar.
  \end{proof}

Using the notation of (\ref{Sec2_Prel_lemmas_sequence_max_min}) we may express the variation as
$$\var\big(\wt{\M}f\big) = \sum_{j = -\infty}^{\infty} \var_{[a_j^+, a_{j+1}^-]}\big(\wt{\M}f\big) = 
\sum_{j = -\infty}^{\infty} \, \Big\{ \wt{\M}f(a_j^+) - \wt{\M}f(b_j^-)\Big\} + \Big\{ \wt{\M}f(a_{j+1}^-) - \wt{\M}f(b_j^+)\Big\}.$$
Trivial modifications apply to the formula above in case there is a last string of local maxima to one of the sides (in this case, the side limits $\lim_{n\to \pm \infty} \wt{\M}f(n)$ would appear in the formula), and we adjust the notation accordingly. 

\smallskip

It was proved in \cite[Lemma 3]{BCHP} that if $f \in BV(\Z)$ and $n$ is a point of right or left local maximum of $\wt{\M}f$, then 
\begin{equation}\label{point_of_contact}
\wt{\M}f(n) = |f(n)|.
\end{equation}
This observation leads to the following lemma.

\begin{lemma}[One-sided control]\label{Lem7}
Let $f \in BV(\Z)$ and let  $\{[a_j^-,a_j^+]\}_{j \in \Z}$ and $\{[b_j^-,b_j^+]\}_{j \in \Z}$ be the ordered strings of local maxima and local minima of $\wt{\M}f$ as in \eqref{Sec2_Prel_lemmas_sequence_max_min}. The following statements hold:
\begin{enumerate}
\item[(i)] If $b_i^- \leq n \leq a_{i+1}^+$ for some $i \in \Z$ $($or if $n \leq a_{i+1}^+$ and $\wt{\M}f$ is non-decreasing on $(-\infty, a_{i+1}^+]$$)$, then
$$\var_{[n,\infty)}\big(\wt{\M}f\big) \leq \var_{[n,\infty)}(f).$$

\item[(ii)] If $a_i^- \leq n \leq b_{i}^+$ for some $i \in \Z$ $($or if $a_i^- \leq n$ and $\wt{\M}f$ is non-increasing on $[a_{i}^-, \infty)$$)$, then
$$\var_{(-\infty,n]}\big(\wt{\M}f\big) \leq \var_{(-\infty,n]}(f).$$

\end{enumerate}
\end{lemma}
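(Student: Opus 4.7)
My plan is to decompose the one-sided variation $\var_{[n,\infty)}(\wt{\M}f)$ as a telescoping alternating sum of ascents and descents of $\wt{\M}f$ dictated by the sequence of strings of local maxima and minima in (\ref{Sec2_Prel_lemmas_sequence_max_min}), and then to bound each ascent and descent by the variation of $f$ over the corresponding interval, using the contact property (\ref{point_of_contact}). I treat only part (i); part (ii) follows by reflecting the integer axis.

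Concretely, in the main case $b_i^- \leq n \leq a_{i+1}^+$, set $n^\star := \max\{n, b_i^+\}$, so that $\wt{\M}f$ is constant on $[n, n^\star]$ and non-decreasing on $[n^\star, a_{i+1}^-]$. Since $\wt{\M}f$ is constant on each max/min string and monotone between consecutive strings,
$$\var_{[n,\infty)}(\wt{\M}f) = \bigl(\wt{\M}f(a_{i+1}^-) - \wt{\M}f(n^\star)\bigr) + \sum_{j \geq i+1}\Bigl\{\bigl(\wt{\M}f(a_j^+)-\wt{\M}f(b_j^-)\bigr) + \bigl(\wt{\M}f(a_{j+1}^-)-\wt{\M}f(b_j^+)\bigr)\Bigr\}.$$
By (\ref{point_of_contact}), each endpoint of a local-maximum string satisfies $\wt{\M}f(a_j^{\pm}) = |f(a_j^{\pm})|$, while $\wt{\M}f \geq |f|$ holds pointwise everywhere. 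Plugging these in gives, for each descent,
$$\wt{\M}f(a_j^+) - \wt{\M}f(b_j^-) \leq |f(a_j^+)| - |f(b_j^-)| \leq \var_{[a_j^+, b_j^-]}(|f|) \leq \var_{[a_j^+, b_j^-]}(f),$$
where the middle step is the triangle inequality $g(y)-g(x) \leq \var_{[x,y]}(g)$ applied with $g=|f|$, and the last step uses $\bigl||a|-|b|\bigr| \leq |a-b|$ termwise. The identical estimate handles each ascent $\wt{\M}f(a_{j+1}^-)-\wt{\M}f(b_j^+)$, as well as the initial term (using $\wt{\M}f(n^\star) \geq |f(n^\star)|$ together with $\wt{\M}f(a_{i+1}^-) = |f(a_{i+1}^-)|$). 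Since the intervals that arise on the right-hand side are pairwise disjoint subsets of $[n,\infty)$, summing yields $\var_{[n,\infty)}(\wt{\M}f) \leq \var_{[n,\infty)}(f)$.

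The remaining work is routine bookkeeping to cover the boundary situations: (a) $n$ falling inside a max or min string, where the initial term degenerates; (b) the sequence of extrema to the right being finite, in which case Lemma \ref{lemma_min_max} identifies $\lim_{m \to \infty}\wt{\M}f(m)$ with the larger of the two side-limits of $|f|$ at $\pm\infty$, so that the final tail after the last maximum is bounded by the same device; and (c) the alternative hypothesis that $\wt{\M}f$ is non-decreasing on $(-\infty, a_{i+1}^+]$, where the sum simply starts with the single ascent to $a_{i+1}^-$ and the argument for the initial segment is unchanged. I do not anticipate a conceptual obstacle here — the engine of the proof is a one-line use of the contact property at each right/left local maximum — so the main difficulty is simply arranging the casework so that every sub-case fits the same telescoping template without double-counting or missing any contribution.
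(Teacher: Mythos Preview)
Your proposal is correct and follows essentially the same route as the paper's proof: the paper likewise writes $\var_{[n,\infty)}(\wt{\M}f)$ as an alternating sum of rises and falls determined by \eqref{Sec2_Prel_lemmas_sequence_max_min}, applies \eqref{point_of_contact} at the local-maximum endpoints and $\wt{\M}f\geq |f|$ at the local-minimum endpoints (and at $n$), and then dominates the result by $\var_{[n,\infty)}(f)$, with the same remarks about a last string of maxima and the non-decreasing case. Your introduction of $n^\star$ and your explicit disjoint-intervals accounting are cosmetic refinements of the same argument.
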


The previous lemma describes the {\it one-sided control} of the variation of the uncentered maximal function by the variation of the original function, and is hence a refinement of the previously obtained bound (\ref{Intro_BCHP}). Such control is unknown for the centered maximal function (at least it is not apparent from the proofs of the $BV-$boundedness in \cite{Ku, Te}), and it is the main reason why our argument for the $BV-$continuity does not apply to the centered case. We now indicate the proof of the Lemma \ref{Lem7}.
\begin{proof}
We consider the case (i). First supposing $b_i^- \leq n \leq a_{i+1}^+$ then 
\begin{align}\label{Eq_1_pf_Lem8}
\begin{split}
\var_{[n,\infty)}\big(\wt{\M}f\big) 
	&= (\wt{\M}f(a_{i+1}^+) - \wt{\M}f(n)) +  \sum_{j = i+1}^{\infty} \var_{[a_j^+, a_{j+1}^-]}\big(\wt{\M}f\big) \\
	&= 
(\wt{\M}f(a_{i+1}^+) - \wt{\M}f(n)) + \sum_{j = i+1}^{\infty} \, \Big\{ \wt{\M}f(a_j^+) - \wt{\M}f(b_j^-)\Big\} + \Big\{ \wt{\M}f(a_{j+1}^-) - \wt{\M}f(b_j^+)\Big\}.
\end{split}
\end{align}
At each $a_j^+$ or $a_j^-$ we apply (\ref{point_of_contact}), while at each $b_j^-$ or $b_j^+$ we apply the trivial bound $\wt{\M}f(n) \geq |f(n)|$; as a result we conclude 
\[ 
\var_{[n,\infty)}\big(\wt{\M}f\big)  \leq 
\big(|f(a_{i+1}^+)| - |f(n)|\big) + \sum_{j = i+1}^{\infty} \, \big\{ |f(a_j^+)| - |f(b_j^-)|\big\} + \big\{ |f(a_{j+1}^-)| - |f(b_j^+)|\big\} \leq \var_{[n,\infty)} (f),
\]
as claimed. In case there is a last string of local maxima, minor modifications apply to \eqref{Eq_1_pf_Lem8} and we use Lemma \ref{lemma_min_max}. In the case that $n \leq a_{i+1}^+$ and $\wt{\M}f$ is non-decreasing on $(-\infty, a_{i+1}^+]$$)$, the above argument also applies.
Part (ii) follows analogously.
\end{proof}

\smallskip

Our next lemma establishes the pointwise convergence of the maximal functions.

\begin{lemma}[Pointwise convergence]\label{Lem8}
Let $\{f_j\}_{j\geq 1} \subset BV(\Z)$ and $f \in BV(\Z)$ be such that $\|f_j - f\|_{BV(\Z)} \to 0$ as $j \to \infty$. Then $\wt{\M} f_j(n) \to \wt{\M} f (n)$ uniformly for $n \in \Z$.
\end{lemma}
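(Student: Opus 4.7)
The plan is to reduce the statement to an $\ell^\infty$ estimate and then exploit the trivial pointwise bound that $\wt{\M}g(n) \leq \|g\|_{\ell^\infty(\Z)}$ for every $g$.

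First I would observe that for any $g \in BV(\Z)$ and any $n \in \Z$, telescoping gives
\[ g(n) - g(-\infty) = \sum_{k = -\infty}^{n-1} \bigl(g(k+1) - g(k)\bigr), \]
hence $|g(n)| \leq |g(-\infty)| + \var(g) = \|g\|_{BV(\Z)}$. Applying this to $g = f_j - f$, I obtain
\[ \|f_j - f\|_{\ell^\infty(\Z)} \leq \|f_j - f\|_{BV(\Z)} \longrightarrow 0. \]

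Next, I would use the sublinearity-type inequality for $\wt{\M}$. For every interval $[n-r,n+s]$,
\[ \frac{1}{r+s+1}\sum_{k=-r}^{s} |f_j(n+k)| \leq \frac{1}{r+s+1}\sum_{k=-r}^{s} |f(n+k)| + \frac{1}{r+s+1}\sum_{k=-r}^{s} |f_j(n+k) - f(n+k)|, \]
and taking the supremum over $r,s \geq 0$ (together with the symmetric inequality) yields
\[ \bigl|\wt{\M}f_j(n) - \wt{\M}f(n)\bigr| \leq \wt{\M}(f_j - f)(n). \]
Combining this with the trivial bound $\wt{\M}h(n) \leq \|h\|_{\ell^\infty(\Z)}$ (which holds because every average of $|h|$ is dominated by $\|h\|_{\ell^\infty(\Z)}$), I arrive at
\[ \sup_{n \in \Z} \bigl|\wt{\M}f_j(n) - \wt{\M}f(n)\bigr| \leq \|f_j - f\|_{\ell^\infty(\Z)} \leq \|f_j - f\|_{BV(\Z)} \longrightarrow 0, \]
which is the desired uniform convergence.

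There is no serious obstacle here: the lemma is a soft consequence of subadditivity of $\wt{\M}$ combined with the fact that $BV(\Z)$-convergence controls $\ell^\infty(\Z)$-convergence. The substantive difficulty of Theorem \ref{Thm2} lies in upgrading pointwise (uniform) convergence to convergence of the variations, which is where the one-sided control of Lemma \ref{Lem7} will enter.
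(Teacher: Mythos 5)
Your proof is correct and follows essentially the same route as the paper: bound $\|f_j-f\|_{\ell^\infty(\Z)}$ by $\|f_j-f\|_{BV(\Z)}$ and then use sublinearity of $\wt{\M}$ together with the trivial bound $\wt{\M}h \leq \|h\|_{\ell^\infty(\Z)}$. The only difference is that you spell out the telescoping and subadditivity details that the paper leaves implicit, which is fine.
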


\begin{proof}
Given $\varepsilon >0$, there exists $j_0$ such that 
$$\|f_j - f\|_{\ell^\infty(\Z)} \leq \|f_j - f\|_{BV(\Z)} < \varepsilon$$ 
for $j \geq j_0$. By the sublinearity of the maximal operator we find that 
$$\big|\wt{\M} f_j(n) - \wt{\M} f(n)\big| \leq \big|\wt{\M} (f_j - f) (n)\big| < \varepsilon$$
for $j \geq j_0$, uniformly in $n$.
\end{proof}

The next result is the Brezis-Lieb reduction we need in order to prove Theorem \ref{Thm2}. 

\begin{lemma}[Brezis-Lieb reduction]
Let $\{f_j\}_{j\geq 1} \subset BV(\Z)$ and $f \in BV(\Z)$ be such that $\|f_j - f\|_{BV(\Z)} \to 0$ as $j \to \infty$. In order to prove Theorem \ref{Thm2} it suffices to show that 
\begin{equation}\label{Lem4_eq1_cond}
\var\big(\wt{\M}f_j\big) \to \var\big(\wt{\M}f\big)
\end{equation}
as $j \to \infty$.
\end{lemma}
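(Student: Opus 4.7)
The plan is to execute a standard Brezis--Lieb style reduction: pointwise convergence of a sequence in $\ell^1$ together with convergence of norms yields norm convergence. First, observe that by Lemma \ref{Lem8} we have $\widetilde{\M}f_j \to \widetilde{\M}f$ uniformly on $\Z$; in particular the left limits satisfy $\widetilde{\M}f_j(-\infty) \to \widetilde{\M}f(-\infty)$. Therefore the term $|\widetilde{\M}f_j(-\infty) - \widetilde{\M}f(-\infty)|$ appearing in $\|\widetilde{\M}f_j - \widetilde{\M}f\|_{BV(\Z)}$ already goes to zero, and the problem reduces to showing
\[
\var\bigl(\widetilde{\M}f_j - \widetilde{\M}f\bigr) \longrightarrow 0.
\]

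Next, I would introduce the discrete derivatives $g_j := (\widetilde{\M}f_j)'$ and $g := (\widetilde{\M}f)'$ in $\ell^1(\Z)$. By definition $\var(\widetilde{\M}f_j) = \|g_j\|_{\ell^1(\Z)}$ and $\var(\widetilde{\M}f) = \|g\|_{\ell^1(\Z)}$, and for each $n \in \Z$ the uniform convergence of Lemma \ref{Lem8} yields
\[
g_j(n) = \widetilde{\M}f_j(n+1) - \widetilde{\M}f_j(n) \longrightarrow \widetilde{\M}f(n+1) - \widetilde{\M}f(n) = g(n).
\]
Under the hypothesis \eqref{Lem4_eq1_cond}, $\|g_j\|_{\ell^1(\Z)} \to \|g\|_{\ell^1(\Z)}$, so I am exactly in the setting of the Brezis--Lieb lemma in $\ell^1$.

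The closing step is to apply Fatou's lemma for sums to the nonnegative sequence
\[
h_j(n) := |g_j(n)| + |g(n)| - |g_j(n) - g(n)| \geq 0,
\]
where nonnegativity comes from the triangle inequality. Since $h_j(n) \to 2|g(n)|$ pointwise, Fatou yields
\[
2\|g\|_{\ell^1(\Z)} \leq \liminf_{j \to \infty} \sum_{n \in \Z} h_j(n) = \|g\|_{\ell^1(\Z)} + \lim_{j \to \infty} \|g_j\|_{\ell^1(\Z)} - \limsup_{j \to \infty} \|g_j - g\|_{\ell^1(\Z)},
\]
and the hypothesis $\|g_j\|_{\ell^1(\Z)} \to \|g\|_{\ell^1(\Z)}$ forces $\limsup_{j \to \infty} \|g_j - g\|_{\ell^1(\Z)} \leq 0$. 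This gives $\var(\widetilde{\M}f_j - \widetilde{\M}f) \to 0$, completing the reduction.

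There is no real obstacle in this lemma: the content is the clean observation that uniform pointwise convergence of $\widetilde{\M}f_j$ (Lemma \ref{Lem8}) upgrades to convergence of the derivatives in $\ell^1$ whenever the total variations converge, via Fatou. The genuine difficulty of Theorem \ref{Thm2} is therefore fully concentrated in verifying the hypothesis \eqref{Lem4_eq1_cond}, which is where the one-sided variation control of Lemma \ref{Lem7} and the point-of-contact identity \eqref{point_of_contact} will have to do the heavy lifting.
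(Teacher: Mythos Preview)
Your proof is correct and follows the same Brezis--Lieb strategy as the paper. Two small differences are worth noting: the paper handles the $\ell^1$ step via dominated convergence (bounding $\big||g_j(n)|-|g_j(n)-g(n)|\big|\le |g(n)|$) rather than your Fatou argument on $|g_j|+|g|-|g_j-g|$, and the paper verifies $\widetilde{\M}f_j(-\infty)\to\widetilde{\M}f(-\infty)$ by separately tracking $a_j\to a$, $b_j\to b$ via Lemma~\ref{lemma_min_max}, whereas you extract it directly from the uniform convergence of Lemma~\ref{Lem8} (which is cleaner, though it tacitly uses Lemma~\ref{lemma_min_max} to know those limits exist). These are cosmetic variations on the same argument.
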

\begin{proof}
For any fixed $n$ we have
$$\big(\wt{\M}f_j\big)'(n) = \big(\wt{\M}f_j - \wt{\M}f\big)'(n)  + \big(\wt{\M}f\big)'(n)$$
and thus, by triangle inequality,
$$\Big| \big|  \big(\wt{\M}f_j\big)'(n) \big| - \big| \big(\wt{\M}f_j - \wt{\M}f\big)'(n) \big| \Big| \leq \big|  \big(\wt{\M}f\big)'(n) \big|.$$
Adding up and applying (\ref{Intro_BCHP}) we get 
\begin{equation}\label{cond_Dom_conv}
\sum_{n \in \Z} \Big| \big|  \big(\wt{\M}f_j\big)'(n) \big| - \big| \big(\wt{\M}f_j - \wt{\M}f\big)'(n) \big| \Big| \leq \sum_{n \in \Z}\big|  \big(\wt{\M}f\big)'(n) \big| = \var\big(\wt{\M}f\big) \leq \var (f) < \infty.
\end{equation}
By \eqref{cond_Dom_conv}, we may apply the dominated convergence theorem and Lemma \ref{Lem8} to get
\begin{align}\label{Lem3_eq1}
\begin{split}
 \lim_{j \to \infty} \sum_{n \in \Z} & \Big( \big|  \big(\wt{\M}f_j\big)'(n) \big| -  \big| \big(\wt{\M}f_j -  \wt{\M}f\big)'(n) \big| \Big) \\
 & = \sum_{n \in \Z}  \lim_{j \to \infty}  \Big( \big|  \big(\wt{\M}f_j\big)'(n) \big| -  \big| \big(\wt{\M}f_j -  \wt{\M}f\big)'(n) \big|  \Big) =  \var\big(\wt{\M}f\big).
 \end{split}
\end{align}

Now assume that \eqref{Lem4_eq1_cond} holds. Then
\begin{equation}\label{Lem3_eq2}
 \lim_{j \to \infty} \sum_{n \in \Z} \Big(  \big|  \big(\wt{\M}f_j\big)'(n) \big| -  \big| \big(\wt{\M}f_j -  \wt{\M}f\big)'(n) \big| \Big) =  \var\big(\wt{\M}f\big) -  \lim_{j \to \infty} \var\big(\wt{\M}f_j - \wt{\M}f\big).
\end{equation}
From \eqref{Lem3_eq1} and \eqref{Lem3_eq2} we conclude that 
\begin{equation}\label{Sec2_Lem9_conc1}
\lim_{j \to \infty} \var\big(\wt{\M}f_j - \wt{\M}f\big) = 0.
\end{equation}
Let $\lim_{n\to -\infty}|f_j(n)| = a_j$, $\lim_{n\to \infty}|f_j(n)| = b_j$ and $c_j = \max\{a_j,b_j\}$, as well as $\lim_{n\to -\infty}|f(n)| = a$, $\lim_{n\to \infty}|f(n)| = b$ and $c= \max\{a,b\}$. Then assuming $\|f_j - f\|_{BV(\Z)} \to 0$ as $j \to \infty$ immediately implies that $a_j \to a$, by the definition of the BV norm. We may also then easily deduce that $b_j \to b$ and hence $c_j \to c$ as $j \to \infty$. Indeed, if $b_j \to b$ were false, then we would find an infinite subsequence of $j_i$ such that $|b_{j_i} - b| \geq \delta$ for some fixed $\delta>0$; we may further assume that we consider only $j_i$ sufficiently large that $|a_{j_i} - a| \leq \delta/4$. Take $n_0$ sufficiently large that for all $n \geq n_0$, $|f(n) - b| \leq \delta/4$. For each $j_i$ in the subsequence take $n_{j_i} \geq n_{j_{i-1}}$ such that for all $n \geq n_{j_i}$, $|f_{j_i}(n) - b_{j_i}| \leq \delta/4$.  
Then for each $i$ we would have
\[ 
\var (f - f_{j_i}) \geq \big|(f-f_{j_i})(n_{j_i})\big| - \big|(f - f_{j_i})(-\infty)\big| = \big|(f-f_{j_i})(n_{j_i})\big|  - |a-a_{j_i}| \geq \delta/4,
\]
which contradicts $\|f- f_j\|_{BV(\Z)} \to 0.$ Thus we may conclude that $b \to b_j$, as desired.

\smallskip

Finally, by Lemma \ref{lemma_min_max}
\begin{equation}\label{Sec2_Lem9_conc2}
\lim_{n \to -\infty} \wt{\M}f_j(n) = c_j \to c  = \lim_{n \to -\infty} \wt{\M}f(n)\,,
\end{equation}
and we conclude from \eqref{Sec2_Lem9_conc1} and \eqref{Sec2_Lem9_conc2} that $\big\|\wt{\M}f_j - \wt{\M}f\big\|_{BV(\Z)} \to 0$ as $j \to \infty$.
\end{proof}

\subsection{Proof of Theorem \ref{Thm2}} We shall prove \eqref{Lem4_eq1_cond}. By Lemma \ref{Lem8} and Fatou's lemma we have
\begin{align*}
\var\big(\wt{\M}f\big) = \sum_{n \in \Z} \liminf_{j \to \infty} \big|\big(\wt{\M}f_j\big)'(n)\big| \leq \liminf_{j \to \infty} \sum_{n \in \Z} \big|\big(\wt{\M}f_j\big)'(n)\big|  = \liminf_{j \to \infty} \,\var\big(\wt{\M}f_j\big) .
\end{align*}
We are then left to prove that 
\begin{equation}\label{Thm1_eq_limsup}
\limsup_{j \to \infty} \,\var\big(\wt{\M}f_j\big) \leq \var\big(\wt{\M}f\big).
\end{equation}
Given $\delta > 0$, choose finite integers $x<y$ such that 
$$\var_{(-\infty, x]}\big(\wt{\M}f\big)< \delta \ \ ; \ \  \var_{[y, \infty)}\big(\wt{\M}f\big) < \delta  \ \ ; \ \ \var_{(-\infty, x]}(f) < \delta \ \ ;  \ \ \var_{[y, \infty)}(f) < \delta.$$
Note that here we have used that $\var \big(\wt{\M}f\big)$ is finite, since $\var(f)$ is, and  (\ref{Intro_BCHP}) is known. Since $\wt{\M}f_j \to \wt{\M}f$ uniformly, we can choose $j_0$ such that for $j \geq j_0$ we have 
$$\big\|\wt{\M}f_j - \wt{\M}f\big\|_{\ell^{\infty}(\Z)} < \delta  \ \ ; \ \ \var_{[x,y]}\big(\wt{\M}f_j - \wt{\M}f\big) < \delta  \ \ \ ; \ \  \|f_j - f\|_{BV(\Z)} < \delta.$$
Now observe that, for $j \geq j_0$,  
\begin{align}
\begin{split}\label{Thm1_eq0_main_stream}
\var\big(\wt{\M}f_j\big) & = \var_{(-\infty, x]}\big(\wt{\M}f_j\big)+  \var_{[x, y]}\big(\wt{\M}f_j\big)  +  \var_{[y, \infty)}\big(\wt{\M}f_j\big) \\
& \leq  \var_{(-\infty, x]}\big(\wt{\M}f_j\big)  + \Big(\var_{[x, y]}\big(\wt{\M}f\big) + \var_{[x, y]}\big(\wt{\M}f_j - \wt{\M}f\big) \Big) +  \var_{[y, \infty)}\big(\wt{\M}f_j\big) \\
& \leq  \var_{(-\infty, x]}\big(\wt{\M}f_j\big)  + \Big(  \var \big(\wt{\M}f\big)  + \delta \Big) +   \var_{[y, \infty)}\big(\wt{\M}f_j\big).
\end{split}
\end{align}

\smallskip

Let us prove that $ \var_{[y, \infty)} \big(\wt{\M}f_j\big) $ is small. Fix $j \geq j_0$ and label the strings of local extrema of $\wt{\M}f_j$ as in \eqref{Sec2_Prel_lemmas_sequence_max_min} (here we suppress the dependence on $j$ for simplicity, and assume that $\wt{\M}f_j$ is not constant, since otherwise the verification is trivial):
\begin{equation*}
\ldots < a_{-1}^- \leq a_{-1}^+ < b_{-1}^- \leq b_{-1}^+ < a_0^- \leq a_0^+ < b_0^-\leq b_0^+ < a_1^- \leq a_1^+ < b_1^- \leq b_1^+ < \ldots
\end{equation*}
Let us divide the analysis into three cases:

\subsubsection*{Case 1} If $b_i^- \leq y \leq a_{i+1}^+$ for some $i \in \Z$ (or if $y \leq a_{i+1}^+$ and $\wt{\M}f_j$ is non-decreasing on $(-\infty, a_{i+1}^+]$). By Lemma \ref{Lem7}(i) we have
\begin{align*}
 \var_{[y, \infty)}\big(\wt{\M}f_j\big) \leq  \var_{[y, \infty)}(f_j) \leq  \var_{[y, \infty)}(f) +  \var_{[y, \infty)}(f_j - f) < 2\delta.
\end{align*} 

\subsubsection*{Case 2} If $a_i^+ < y < b_{i}^-$ for some $i \in \Z$. Then (applying Lemma \ref{Lem7}(i) in the first inequality),
\begin{align*}
 \var_{[y, \infty)}\big(\wt{\M}f_j\big) &  =  \var_{[y, b_{i}^-]}\big(\wt{\M}f_j\big) + \var_{[ b_{i}^-, \infty)}\big(\wt{\M}f_j\big)\\
 & \leq \Big(\wt{\M}f_j(y) - \wt{\M}f_j(b_{i}^-)\Big) +  \var_{[ b_{i}^-, \infty)}(f_j)\\
 & \leq \Big(\big(\wt{\M}f(y) + \delta\big) - \big(\wt{\M}f(b_{i}^-) - \delta\big)\Big) + \var_{[ b_{i}^-, \infty)}(f) + \var_{[ b_{i}^-,  \infty)}(f_j-f) \\
 & < \Big( \var_{[y, b_{i}^-]}\big(\wt{\M}f\big) + 2\delta\Big) + 2\delta\\
 & < 5\delta.
\end{align*}

\subsubsection*{Case 3} If $a_i^+ < y$ and  $a_i^+ $ is the last term in the sequence \eqref{Sec2_Prel_lemmas_sequence_max_min}. This means that $\wt{\M}f_j$ is non-increasing on $[y, \infty)$. If we let $c_j = \lim_{n \to \infty} \wt{\M}f_j(n)$ and $c = \lim_{n \to \infty} \wt{\M}f(n)$, then $|c_j - c| < \delta$. Thus
\begin{align*}
 \var_{[y, \infty)}\big(\wt{\M}f_j\big) = \wt{\M}f_j(y) - c_j \leq \big(\wt{\M}f(y) + \delta\big) - (c - \delta) \leq \var_{[y, \infty)}\big(\wt{\M}f\big) + 2\delta < 3\delta.
\end{align*}

We have thus covered all the cases with the analysis above, arriving at
\begin{equation}\label{Thm1_eq1_right}
\var_{[y, \infty)}\big(\wt{\M}f_j\big) < 5\delta
\end{equation}
for $j \geq j_0$. Proceeding analogously using Lemma \ref{Lem7}(ii), we get
\begin{equation}\label{Thm1_eq1_left}
\var_{(-\infty, x]}\big(\wt{\M}f_j\big) < 5\delta
\end{equation}
for $j \geq j_0$. Plugging \eqref{Thm1_eq1_right} and \eqref{Thm1_eq1_left} into \eqref{Thm1_eq0_main_stream} we arrive at
$$\var\big(\wt{\M}f_j\big)  < \var\big(\wt{\M}f\big) + 11\delta$$
for $j \geq j_0$. Since our parameter $\delta >0$ was arbitrary, this establishes \eqref{Thm1_eq_limsup} and concludes the proof.

\section{Counterxamples in the discrete setting: proofs of Theorems \ref{Thm5} and \ref{Thm6}}

\subsection{Proof of Theorem \ref{Thm5}} For $g:\Z \to \R$, $n\in\Z$ and $r,s \in \Z^+ \times \Z^+$, we define the fractional average
$$
\mc{A}_{r,s}g(n):=\frac{1}{(r+s+1)^{1-\beta}}\sum_{k=-r}^{s}g(n+k).
$$
Let $f: \Z \to \R$ be the discrete delta function at the origin, i.e.
\begin{equation}\label{pf_Thm5_delta_function}
f(0)=1\ \ \text{and} \ \ f(n)=0 \ \ \forall \ n\in\Z\setminus\{0\}. 
\end{equation}
For $0 < \beta < 1$ and $\wt{\M}_{\beta}f$ defined as in \eqref{Intro_Def_Disc_Max_oper_dim1}, note that 
\begin{equation}\label{eq_pf_thm5_f}
\wt{\M}_{\beta}f(0)=1,\ \ \wt{\M}_{\beta}f(1)=2^{\beta-1}\  \text{and}\  \ \big(\wt{\M}_{\beta}f\big)'(0)=2^{\beta-1}-1\neq 0.
\end{equation}

\smallskip

For each $j\in\N$ define the function $L_j:\Z^+ \to \R$ by
\begin{equation*}
L_j(s) = \frac{1}{(s+1)^{1-\beta}}\left(\frac{s+1}{2j}+1\right).
\end{equation*}
Note that for each $j$, $L_j(0) = 1 + \tfrac{1}{2j}$ and $L_j(s) \to \infty$ as $s \to \infty$ due to the fact that $0 <\beta$. We now construct inductively an increasing sequence $\{h_j\}_{j \geq1}$ of natural numbers by letting $h_j$ be the smallest integer such that $h_j > h_{j-1}$ ($h_1 \geq 1$) and
\begin{equation}\label{condition_L_pf_Thm5}
L_j(h_j) > L_j(s)
\end{equation}
for all $s=0, 1,2,\ldots, h_j -1$.

\smallskip

For each $j \in \N$ define $f_j: \Z \to \R$ by 
\begin{equation}\label{def_f_j_counter_Thm5}
f_{j}(n):=f(n)+\frac{1}{2j}\chi_{[0,h_{j}]}(n)\ \ \forall \ n\in\Z,
\end{equation}
where $\chi_{[0,h_{j}]}$ denotes the characteristic function of the interval $[0,h_{j}]$. Observe that 
$$\|f_j - f\|_{BV(\Z)} = \frac{1}{j} \to 0$$ 
as $j \to \infty$. From \eqref{condition_L_pf_Thm5} we have that $\wt{\M}_{\beta}f_j(0)= \mc{A}_{0,h_j}f_j(0)$. At the point $n=1$, there is a pair of nonnegative integers $(r_j,s_j)$ such that $\wt{\M}_{\beta}f_j(1) = \mc{A}_{r_j,s_j}f_j(1)$. From \eqref{condition_L_pf_Thm5} we have that $(r_j,s_j) \neq (0,0)$ (since $\mc{A}_{1,h_j-1}f_j(1) > \mc{A}_{0,0}f_j(1)$). Also, we cannot have $r_j = 0$, for in this case we would have $s_j \geq 1$ and $\mc{A}_{1,s_j-1}f_j(1) > \mc{A}_{0,s_j}f_j(1)$, a contradiction. Hence $r_j \geq 1$ and another application of \eqref{condition_L_pf_Thm5} gives us that $\wt{\M}_{\beta}f_j(1)= \mc{A}_{1,h_j-1}(1) = \wt{\M}_{\beta}f_j(0)$. We conclude that for all $j \in \N$,
\begin{equation}\label{eq_pf_thm5_f_j}
\big(\wt{\M}_{\beta}f_j\big)'(0) = 0.
\end{equation}
From \eqref{eq_pf_thm5_f} and \eqref{eq_pf_thm5_f_j} it follows that 
$$\big\|\big(\wt{\M}_{\beta}f_j\big)' - \big(\wt{\M}_{\beta}f\big)'\big\|_{\ell^q(\Z)} = \var_q \big(\wt{\M}_{\beta}f_j - \wt{\M}_{\beta}f\big) \nrightarrow 0$$ 
as $j \to \infty$.

\subsection{Proof of Theorem \ref{Thm6}} For $g:\Z \to \R$, $n\in\Z$ and $r \in \Z^+$, we define the (centered) fractional average
$$
\mc{A}_{r}g(n):=\frac{1}{(2r+1)^{1-\beta}}\sum_{k=-r}^{r}g(n+k).
$$
Let $f:\Z \to \R$ be the discrete delta function as in \eqref{pf_Thm5_delta_function}. Then 
\begin{equation}\label{eq_pf_thm6_f_centered}
\M_{\beta}f(0)=1,\ \ \M_{\beta}f(1)=3^{\beta-1}\  \text{and}\  \ \big(\M_{\beta}f\big)'(0)=3^{\beta-1}-1\neq 0.
\end{equation}

We proceed similarly as before, conveniently constructing an increasing sequence $\{h_j\}_{j \geq1}$ of natural numbers such that $f_j$ defined as in \eqref{def_f_j_counter_Thm5} verifies
\begin{equation}\label{cond_pf_Thm6}
\M_{\beta}f_j(0)= \mc{A}_{h_j} f_j(0) \ \ {\rm and} \ \  \M_{\beta}f_j(1)= \mc{A}_{h_j-1}f_j(1).
\end{equation}
If we have \eqref{cond_pf_Thm6}, then 
\begin{align}\label{pf_thm6_eq2}
\begin{split}
\big(\M_{\beta}f_j)'(0) & = \M_{\beta}f_j(1) - \M_{\beta}f_j(0)\\
& = \left(1 + \frac{h_j+1}{2j}\right)\left( (2h_j-1)^{\beta -1} - (2h_j+1)^{\beta -1}\right)\\
& \sim \left(1 + \frac{h_j+1}{2j}\right) h_j^{\beta -2}\\
& \to 0
\end{split}
\end{align}
as $j \to \infty$, since $0 < \beta < 1$. Then, from \eqref{eq_pf_thm6_f_centered} and \eqref{pf_thm6_eq2} we conclude that 
$$\big\|\big(\M_{\beta}f_j\big)' - \big(\M_{\beta}f\big)'\big\|_{\ell^q(\Z)} = \var_q \big(\M_{\beta}f_j - \M_{\beta}f\big) \nrightarrow 0$$ 
as $j \to \infty$.

\smallskip

To construct $\{h_j\}_{j \geq1}$ verifying \eqref{cond_pf_Thm6} just observe that, for each $j$, the functions 
$$F_j(r ) = \frac{1}{(2r+1)^{1-\beta}}\left(\frac{r+1}{2j}+1\right) \ \ {\rm and} \ \ G_j( r)= \frac{1}{(2r+1)^{1-\beta}}\left(\frac{r+2}{2j}+1\right)$$
both tend to infinity as $r\to \infty$, and are strictly increasing for $r \geq r_0(j,\beta)$. We can define $h_j>h_{j-1}$ to be the smallest integer such that $F_j(h_j) > F_j(s)$ for all $s=0,1,2, \ldots, h_{j}-1$ and $G_j(h_j-1) > G_j(s)$, for all $s=0,1,2, \ldots, h_{j}-2$. Assertion (\ref{cond_pf_Thm6}) can then be verified.

\section{Counterxamples in the continuous setting: proofs of Theorems \ref{Thm3} and \ref{Thm4}}

We proceed here by adapting the counterexamples of the discrete setting, presented in the previous section, to the continuous setting.

\subsection{Proof of Theorem \ref{Thm3}} For $g \in L^1_{loc}(\R)$, $x \in \R$ and $r,s \geq 0$, we define the fractional average
$$
A_{r,s}g(x):=\frac{1}{(r+s)^{1-\beta}}\int_{x-r}^{x+s} g(y)\,\dy.
$$
Since $0 < \beta < 1$, note that $A_{0,0}g(x) = 0$.

\smallskip

Let $f(x) = \chi_{[0,1]}(x)$. One can check that 
\begin{equation}\label{pf_Thm3_eq1}
\wt{M}_{\beta}f(0) = A_{0,1}f(0) = 1 \ \ {\rm and} \ \  \wt{M}_{\beta}f(2) = A_{2,0}f(2) = 2^{\beta-1}.
\end{equation}
For $j \in \N$ we define 
\begin{equation}\label{pf_Thm3_def_f_j}
f_j (x) := f(x)+ \frac{1}{2j}\chi_{[0,h_{j}]}(x),
\end{equation}
where $\{h_j\}_{j \geq1}$ is an increasing sequence of natural numbers (say, greater than $2$) to be conveniently chosen later. Observe that $\|f_j - f\|_{BV(\R)} = \frac{1}{j} \to 0$. We want to choose $h_j$ in a way that 
\begin{equation}\label{pf_Thm3_eq2}
\wt{M}_{\beta}f_j(0) = \wt{M}_{\beta}f_j(2) = A_{0,h_j}f_j(0).
\end{equation}
In light of \eqref{pf_Thm3_eq1} and \eqref{pf_Thm3_eq2} we would have
\begin{align*}
\var_q \big(\wt{M}_{\beta}f_j - \wt{M}_{\beta}f\big) \geq \left( \frac{\big|\big(\wt{M}_{\beta}f_j - \wt{M}_{\beta}f\big)(2) -\big(\wt{M}_{\beta}f_j - \wt{M}_{\beta}f\big)(0)\big|^q}{|2-0|^{q-1}}\right) \nrightarrow 0
\end{align*}
as $j \to \infty$.

\smallskip

In order to properly choose $\{h_j\}_{j \geq1}$ observe that, for each $j \in \N$, the function 
$$F_j(s) = \frac{1}{s^{1-\beta}} \left( 1 + \frac{s}{2j}\right)$$
is such that $F_j(s) \to \infty$ as $s \to \infty$, and $F_j$ is strictly increasing for $s \geq s_0(j,\beta)$. We can inductively construct an increasing sequence $\{h_j\}_{j \geq1}$ of natural numbers (greater than $2$) such that 
$$\wt{M}_{\beta}f_j(0)  = A_{0,h_j}f_j(0) > A_{0,s}f_j(0)$$ 
for all $j \in \N$ and all $0 \leq s < h_j$. With such a choice, observe that we must also have $\wt{M}_{\beta}f_j(2) = A_{0,h_j}f_j(0)$. In fact, for each $j \in \N$, there exists a pair $(r_j, s_j) \neq (0,0)$ such that $\wt{M}_{\beta}f_j(2) = A_{r_j,s_j}f_j(2)$. Clearly $r_j \leq 2$ and $s_j \leq h_j -2$. Also, we must have $s_j >0$ since $A_{2,h_j-2}f_j(2) > A_{2,0}f_j(2) > A_{r,0}f_j(2)$ for any $0\leq r <2$. The fact that $A_{r+\varepsilon, s - \varepsilon}f_j(2) > A_{r,s}f_j(2)$ if $r<2$, $0 < s$ and $0 < \varepsilon < \min\{2-r, s\}$ implies that we must have $r_j =2$. Finally, since by construction $A_{2,h_j-2}f_j(2) > A_{2,s}f_j(2)$ for any $0 \leq s < h_j -2$, we conclude that $s_j = h_j-2$ and $\wt{M}_{\beta}f_j(2) = A_{2,h_j-2}f_j(2) = A_{0,h_j}f_j(0)= \wt{M}_{\beta}f_j(0)$. This concludes the proof.

\subsection{Proof of Theorem \ref{Thm4}} For $g \in L^1_{loc}(\R)$, $x \in \R$ and $r \geq 0$, define now the centered fractional average
$$
A_{r}g(x):=\frac{1}{(2r)^{1-\beta}}\int_{x-r}^{x+r} g(y)\,\dy.
$$
Since $0 < \beta < 1$, note again that $A_{0}g(x) = 0$. As before, let $f(x) = \chi_{[0,1]}(x)$ and, for $j \in \N$, define $f_j$ as in \eqref{pf_Thm3_def_f_j}, with $\{h_j\}_{j \geq1}$ being an increasing sequence of natural numbers (say, greater than $2$) to be chosen later. One can check that 
\begin{equation}\label{pf_Thm4_eq1}
M_{\beta}f(0) = A_{1}f(0) = 2^{\beta-1} \ \ {\rm and} \ \  M_{\beta}f(2) = A_{2}f(0) = 4^{\beta-1}.
\end{equation}
We will choose the sequence $\{h_j\}_{j \geq1}$ in a way that 
\begin{equation}\label{pf_Thm4_eq2}
M_{\beta}f_j(0) = A_{h_j}f_j(0) \ \ {\rm and} \ \ M_{\beta}f_j(2) = A_{h_j-2}f_j(2),
\end{equation}
and in this case, observe that
\begin{align}\label{pf_Thm4_eq3}
\begin{split}
M_{\beta}f_j(2) - M_{\beta}f_j(0) & = \left(1 + \frac{h_j}{2j}\right)\left( (2h_j-4)^{\beta -1} - (2h_j)^{\beta -1}\right)\\
& \sim \left(1 + \frac{h_j}{2j}\right) h_j^{\beta -2}\\
 & \to 0
\end{split}
\end{align}
as $j \to \infty$. In light of \eqref{pf_Thm4_eq1} and \eqref{pf_Thm4_eq3} we would have
\begin{align*}
\var_q \big(M_{\beta}f_j - M_{\beta}f\big) \geq \left( \frac{\big|\big(M_{\beta}f_j - M_{\beta}f\big)(2) -\big(M_{\beta}f_j - M_{\beta}f\big)(0)\big|^q}{|2-0|^{q-1}}\right) \nrightarrow 0
\end{align*}
as $j \to \infty$.

\smallskip

It remains to choose the increasing sequence $\{h_j\}_{j \geq1}$ of natural numbers in order to have \eqref{pf_Thm4_eq2}. This can be accomplished by observing that, for each $j \in \N$, the functions 
$$F_j(r ) = \frac{1}{(2r)^{1-\beta}}\left(\frac{r}{2j}+1\right) \ \ {\rm and} \ \ G_j( r)= \frac{1}{(2r)^{1-\beta}}\left(\frac{r+2}{2j}+1\right)$$
both tend to infinity as $r\to \infty$, and are strictly increasing for $r \geq r_0(j,\beta)$.

\section{$W^{1,1}-$continuity: proof of Theorem \ref{Thm1}}

We now come to the final, and perhaps the most delicate, part of the paper: the proof of Theorem \ref{Thm1}. We start with some auxiliary results, some of which may be of independent interest. Recall that if $f \in W^{1,1}(\R)$ then, after adjusting on a set of measure zero, $f$ {\it may be taken to be absolutely continuous $($and we always assume this is the case in this section$)$}.

\subsection{Preliminaries I - Reduction to one-sided maximal functions} For $f \in L^1_{loc}(\R)$ we define the one-sided maximal functions
$$
M_{R}f(x)=\sup_{r>0}\frac{1}{r}\int_{x}^{x+r}|f(y)|\,\dy\ \ \ \text{and}\ \ \ M_{L}f(x)=\sup_{r>0}\frac{1}{r}\int_{x-r}^{x}|f(y)|\,\dy.
$$
One observes (see \cite[Eqn. (5)]{Ta}) that
\begin{equation}\label{Expression_M_MR_ML}
\widetilde Mf(x)=\max\big\{ M_{R}f(x), M_{L}f(x)\big\}\ \ \ \forall x\in\R.
\end{equation}
Recall that if $f \in W^{1,1}(\R)$ (or even just $f \in BV(\R)$) then the uncentered maximal function $\widetilde Mf$ is absolutely continuous and
\begin{equation*}
\big\|\big(\wt{M}f\big)'\big\|_{L^1(\R)} \leq  \|f'\|_{L^1(\R)}.
\end{equation*}
This was proved by Aldaz and P\'{e}rez L\'{a}zaro in \cite[Theorem 2.5]{AP}. In \cite{Ta}, Tanaka observed that if $f \in W^{1,1}(\R)$, then $M_Rf$ and $M_Lf$ are continuous functions that are weakly differentiable and 
\begin{equation*}
\big\|(M_Rf)'\big\|_{L^1(\R)} \leq \|f'\|_{L^1(\R)} \ \ \ {  \rm and} \ \ \ \big\|(M_Lf)'\big\|_{L^1(\R)} \leq \|f'\|_{L^1(\R)}.
\end{equation*}
It then follows that $M_Rf$ and $M_Lf$ are in fact absolutely continuous. {\it The reader should keep this fact in mind throughout this section.}
\begin{lemma}[Reduction to one-sided maximal functions]\label{reduction to lateral max} 
Let $\mc{X}$ be the space of functions $f:\R \to \R$ satisfying the following conditions: (i) $f$ is absolutely continuous; (ii) $\lim_{x\to \pm \infty} f(x) = 0$ and (iii) $f' \in L^1(\R)$. Let $h$ and $g$ be two functions in $\mc{X}$ and let $\{h_{j}\}_{j\geq 1}$ and $\{g_{j}\}_{j\geq 1}$ be two sequences in $\mc{X}$ such that $\|h'_{j}-h'\|_{L^{1}(\R)}\to 0$ and $\|g'_{j}-g'\|_{L^{1}(\R)}\to 0$ when $j\to\infty$. Define $f_{j}:=\max\{g_{j},h_{j}\}$ for each $j \in \N$ and $f:=\max\{g,h\}$. Then
$$
\|f'_{j}-f'\|_{L^{1}(\R)}\to0.
$$
when $j\to\infty$.
\end{lemma}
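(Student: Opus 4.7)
The plan is to reduce the problem to one about absolute values via the identity
\[
\max\{g,h\} \;=\; \tfrac{1}{2}\bigl(g+h+|g-h|\bigr).
\]
Setting $u:=g-h$ and $u_j:=g_j-h_j$, this gives
\[
f_j - f \;=\; \tfrac{1}{2}\bigl((g_j-g) + (h_j-h) + (|u_j|-|u|)\bigr),
\]
and since $\|(g_j-g)'\|_{L^{1}(\R)}$ and $\|(h_j-h)'\|_{L^{1}(\R)}$ both tend to $0$ by hypothesis, it will suffice to establish that $\bigl\|\,(|u_j|)' - (|u|)'\,\bigr\|_{L^{1}(\R)}\to 0$. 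Note that $u,u_j\in\mc{X}$ and $\|u_j'-u'\|_{L^{1}(\R)}\to 0$.

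Next I would invoke the chain-rule identity: if $v\in \mc{X}$, then $|v|$ is absolutely continuous with $(|v|)'(x)=\sgn(v(x))\,v'(x)$ for almost every $x\in\R$, where we set $\sgn(0)=0$. On $\{v\neq 0\}$ this is immediate since $|v|$ coincides locally with $\pm v$; on $\{v=0\}$ it uses the classical fact that the derivative of an absolutely continuous function vanishes almost everywhere on any of its level sets. Applied to both $u$ and $u_j$, this yields the decomposition
\[
(|u_j|)'(x) - (|u|)'(x) \;=\; \sgn(u_j(x))\bigl(u_j'(x)-u'(x)\bigr) \;+\; \bigl(\sgn(u_j(x))-\sgn(u(x))\bigr)\,u'(x).
\]
The $L^{1}(\R)$-norm of the first summand is bounded by $\|u_j'-u'\|_{L^{1}(\R)}\to 0$. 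For the second, I would apply the dominated convergence theorem with dominating function $2|u'|\in L^{1}(\R)$. Pointwise a.e.\ convergence is obtained as follows: the condition $u-u_j\in\mc{X}$ together with $\|u_j'-u'\|_{L^{1}}\to 0$ gives
\[
|u_j(x)-u(x)| \;=\; \Bigl|\int_{-\infty}^{x}(u_j'-u')\,\dy\Bigr| \;\leq\; \|u_j'-u'\|_{L^{1}(\R)},
\]
so $u_j\to u$ uniformly on $\R$. Consequently $\sgn(u_j(x))\to \sgn(u(x))$ at every $x$ with $u(x)\neq 0$, while on the level set $\{u=0\}$ the factor $u'(x)$ vanishes almost everywhere and so kills the potentially bad term.

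The main obstacle here is conceptual rather than technical: since $\sgn$ is discontinuous at $0$, one has no hope of pointwise convergence of $\sgn(u_j)$ to $\sgn(u)$ on $\{u=0\}$, which is where the two one-sided maximal functions cross and could in principle carry positive measure. The structural fact that $u'=0$ almost everywhere on $\{u=0\}$ is precisely what neutralizes this issue and allows dominated convergence to close the argument. Everything else reduces to bookkeeping, completing the proof.
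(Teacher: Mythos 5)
Your proof is correct, but it is organized around a different decomposition than the paper's. You rewrite $\max\{g,h\}=\tfrac12\big(g+h+|g-h|\big)$ and reduce everything to the continuity of $v\mapsto (|v|)'$ for $v=g-h$, which you then prove via the a.e.\ chain rule $(|v|)'=\sgn(v)\,v'$ together with the fact that an absolutely continuous function has vanishing derivative a.e.\ on each level set; the crossing set $\{u=0\}$ is neutralized because $u'=0$ a.e.\ there, and off that set the uniform convergence $\|u_j-u\|_{L^\infty(\R)}\le\|u_j'-u'\|_{L^1(\R)}$ (from the fundamental theorem of calculus) stabilizes the sign, so dominated convergence with dominator $2|u'|$ closes the estimate. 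The paper instead works directly with the sets $X=\{g<h\}$, $Y=\{g=h\}$, $Z=\{g>h\}$ and their analogues $X_j,Y_j,Z_j$, uses the formula for $f'$ on each piece, and shows $\chi_{X\cap Z_j}\to 0$ pointwise by the same fundamental-theorem-of-calculus argument, again concluding by dominated convergence against $|g'-h'|$. The two arguments thus rest on identical ingredients (FTC upgrade of $L^1$-convergence of derivatives, derivative coincidence $g'=h'$ a.e.\ on $\{g=h\}$, and dominated convergence), but your route is a bit more streamlined and, notably, inverts the paper's logical order: your key step is exactly the modulus statement that the paper later records as Lemma \ref{pass_modulus} and deduces \emph{from} Lemma \ref{reduction to lateral max} via $|f|=\max\{f,-f\}$, whereas you prove the modulus statement first (independently, through the chain rule) and obtain the max statement as a consequence — which is legitimate, since your proof of it nowhere uses the max lemma. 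What the paper's set-decomposition buys is an explicit description of $f'$ on each region without invoking the chain rule for $|\cdot|$; what yours buys is brevity and a self-contained proof of the modulus continuity.
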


\begin{proof}
Observe first that $f \in \mc{X}$. The fact that $f$ is absolutely continuous follows directly from the definition of absolute continuity, and the fact that $\lim_{x\to \pm} f(x) = 0$ is obvious. Then $f$ is differentiable almost everywhere. Assume that $f$, $g$ and $h$ are differentiable outside a set $A$ of measure zero. Let $B$ be the set of isolated points of the set $\{x \in \R; \ g(x) = h(x)\}$ (note that $B$ has measure zero as well since it is countable). Then 
\begin{equation}\label{Der_sup}
f' = \left\{
\begin{array}{lcc}
h'  & \ {\rm in} \ & \{ g  < h\} \setminus A\\
g' = h'  & \ {\rm in} \ & \{ g = h\} \setminus \{A \cup B\}\\
g'  & \ {\rm in} \ & \{ g >  h\} \setminus A.
\end{array}
\right.
\end{equation}
In particular, this implies condition (iii). Similarly, each $f_j \in \mc{X}$.

\smallskip

Define the sets
$$
X=\{x\in\R;\,g(x) < h(x)\}\ ;\ Y=\{x\in\R;\,g(x)=h(x)\} \ ;\  Z=\{x\in\R; \,g(x) > h(x)\}
$$
and, analogously, define
$$
X_{j}=\{x\in\R;\,g_{j}(x) < h_{j}(x)\}\ ; \ Y_{j}=\{x\in\R;\,g_{j}(x)=h_{j}(x)\}\ ; \ Z_{j}=\{x\in\R; \,g_{j}(x) > h_{j}(x)\}
$$
for each $j$. From \eqref{Der_sup} we find
\begin{align}\label{Lem10_Eq1_div}
\begin{split}
\int_{X}|f'_{j}(y)-f'(y)|\,\dy&= \int_{X}|f'_{j}(y)-h'(y)|\,\dy\\
&=\int_{X\cap (X_{j}\cup Y_{j})}|f'_{j}(y)-h'(y)|\,\dy+\int_{X\cap Z_{j}}|f'_{j}(y)-h'(y)|\,\dy\\
&=\int_{X\cap (X_{j}\cup Y_{j})}|h'_{j}(y)-h'(y)|\,\dy+\int_{X\cap Z_{j}}|g'_{j}(y)-h'(y)|\,\dy\\
&\leq \|h'_{j}-h'\|_{L^{1}(\R)}+\int_{X\cap Z_{j}}|g'_{j}(y)-g'(y)|\,\dy+\int_{X\cap Z_{j}}|g'(y)-h'(y)|\,\dy\\
&\leq \|h'_{j}-h'\|_{L^{1}(\R)}+\|g'_{j}-g'\|_{L^{1}(\R)}+\int_{\R}|g'(y)-h'(y)|\,\chi_{(X\cap Z_{j})}(y)\,\dy\\
&\to 0 \ \ \text{when} \ \ j\to\infty.
\end{split}
\end{align}
The last convergence to $0$ is a consequence of the hypothesis and the dominated convergence theorem. In fact, if $x \in X$, from the fundamental theorem of calculus we have
\begin{align*}
0 < h(x) - g(x) & = \int_{-\infty}^{x} \{ h'(t) - g'(t)\}\,\dt\\
& =  \int_{-\infty}^{x} \{ h'(t) - h_j'(t)\}\,\dt + \int_{-\infty}^{x} \{ h_j'(t) - g_j'(t)\}\,\dt + \int_{-\infty}^{x} \{ g_j'(t) - g'(t)\}\,\dt\\
& \leq \|h'_{j}-h'\|_{L^{1}(\R)} + \big(h_j(x) - g_j(x)\big) + \|g'_{j}-g'\|_{L^{1}(\R)}\,,
\end{align*}
and we must have $h_j(x) > g_j(x)$ (in particular $x \notin Z_j$) for $j$ large. Hence the function $\chi_{(X\cap Z_{j})}(x)$ tends pointwise to zero as $j \to \infty$.

\smallskip

In a similar way, one can show that 
$$
\int_{Y}|f'_{j}(y)-f'(y)|\,\dy\to 0 \ \ \text{and} \ \ \int_{Z}|f'_{j}(y)-f'(y)|\,\dy\to 0
$$
as $j \to \infty$. The lemma follows from this.
\end{proof}

In order to prove Theorem \ref{Thm1} we must show that if $f\in W^{1,1}(\R)$ and $\{f_{j}\}_{j \geq 1}\subset W^{1,1}(\R)$ are such that $\|f_{j}-f\|_{W^{1,1}(\R)}\to0$ as $j\to\infty$, then $ \|(\widetilde Mf_{j})'-(\widetilde Mf)'\|_{L^{1}(\R)}\to0$ as $j\to\infty$. Due to \eqref{Expression_M_MR_ML} and Lemma \ref{reduction to lateral max} it suffices to show that
$$
\big\|(M_{R}f_{j})'-(M_{R}f)'\big\|_{L^{1}(\R)}\to0 \ \ \text{and}\ \  \big\|(M_{L}f_{j})'-(M_{L}f)'\big\|_{L^{1}(\R)}\to0$$
as $j \to \infty$. We prove the first convergence, and the second one is analogous.

\subsection{Preliminaries II - The derivative of the maximal function} For $f \in L^1(\R)$ and absolutely continuous let us define the set of ``good radii'' of $M_R$ at the point $x$ by
$$
\mc{R}f(x)=\left\{r\in[0,\infty):\ M_{R}f(x)=\frac{1}{r}\int_{x}^{x+r}|f(y)|\,\dy   \right\}.
$$
When $r=0$, it is understood that we take the value of $|f(x)|$. The set $\mc{R}f(x)$ is nonempty and  compact (if $f\not\equiv 0$) for all $x\in\R$, since $f \in L^1(\R)$. 

\begin{lemma}\label{puntos de acumulacion}
Let $f\in W^{1,1}(\R)$ and $\{f_{j}\}_{j \geq 1}\subset W^{1,1}(\R)$ be such that $\|f_{j}-f\|_{W^{1,1}(\R)}\to0$ as $j\to\infty$. For a fixed $x$, define a sequence $\{r_j\}_{j \in \N}$ by picking $r_{j}\in \mc{R}f_{j}(x)$; if $r$ is an accumulation point of $\{r_{j}\}_{j \in \N},$ then $r\in \mc{R}f(x)$.
\end{lemma}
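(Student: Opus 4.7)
The plan is to pass to a convergent subsequence $r_{j_k} \to r$ and then show directly that $r$ achieves the supremum defining $M_R f(x)$. Two inputs drive the argument: (a) the convergence $f_j \to f$ in $W^{1,1}(\R)$ implies uniform convergence $f_j \to f$ on $\R$, since $W^{1,1}(\R) \hookrightarrow C_0(\R)$ (one-dimensional Sobolev embedding via the fundamental theorem of calculus and the fact that $L^1$ functions which are continuous and have limits at $\pm\infty$ must tend to $0$); (b) sublinearity of $M_R$ gives $|M_R f_j(x) - M_R f(x)| \leq M_R(f_j - f)(x) \leq \|f_j - f\|_{L^\infty(\R)} \to 0$, so $M_R f_j(x) \to M_R f(x)$ for every fixed $x$.

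First I would treat the case $r > 0$. For the chosen subsequence, write
\begin{equation*}
M_R f_{j_k}(x) = \frac{1}{r_{j_k}} \int_x^{x+r_{j_k}} |f_{j_k}(y)|\,\dy.
\end{equation*}
As $k \to \infty$, the right-hand side converges to $\tfrac{1}{r}\int_x^{x+r}|f(y)|\,\dy$: the factor $1/r_{j_k}$ is harmless because $r > 0$, while
\begin{equation*}
\left|\int_x^{x+r_{j_k}}|f_{j_k}| - \int_x^{x+r}|f|\right| \leq r_{j_k}\|f_{j_k}-f\|_{L^\infty} + \left|\int_x^{x+r_{j_k}}|f| - \int_x^{x+r}|f|\right|,
\end{equation*}
and both terms tend to $0$ (the second by absolute continuity of the indefinite integral). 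Combining with (b) gives $M_R f(x) = \tfrac{1}{r}\int_x^{x+r}|f(y)|\,\dy$, so $r \in \mc{R}f(x)$.

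For the case $r = 0$, I would instead show that the averages converge to $|f(x)|$. Using that $f$ is continuous at $x$ and that $f_{j_k}\to f$ uniformly,
\begin{equation*}
\left|\frac{1}{r_{j_k}} \int_x^{x+r_{j_k}} |f_{j_k}(y)|\,\dy - |f(x)|\right| \leq \|f_{j_k}-f\|_{L^\infty(\R)} + \sup_{y \in [x,x+r_{j_k}]} |f(y) - f(x)| \longrightarrow 0.
\end{equation*}
Combined with $M_R f_{j_k}(x) \to M_R f(x)$, this gives $M_R f(x) = |f(x)|$, i.e. $0 \in \mc{R}f(x)$ by our convention.

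The only subtle point is really bookkeeping: making sure the $W^{1,1}$-convergence upgrades to uniform convergence and that compactness of $\mc{R}f_j(x)$ (which holds provided $f_j \not\equiv 0$ so that $r_j$ is finite) is in force so that the accumulation point $r$ lies in $[0,\infty)$ rather than at infinity. The two limiting computations above, plus the sublinearity bound, are the core of the argument; everything else is routine.
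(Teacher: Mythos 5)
Your proposal is correct and follows essentially the same route as the paper: pass to a subsequence, use the Sobolev embedding bound $\|M_Rf_j - M_Rf\|_{L^\infty(\R)} \leq \|f_j-f\|_{L^\infty(\R)} \leq \|f_j-f\|_{W^{1,1}(\R)}$, and then take limits of the averages separately for $r>0$ and $r=0$. The only cosmetic difference is that in the case $r=0$ you invoke continuity of $f$ at $x$ where the paper invokes the Lebesgue differentiation theorem, which amounts to the same thing for absolutely continuous representatives.
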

\begin{proof}
Let us assume without loss of generality that $r_j \to r$ as $j \to \infty$ (by passing to a subsequence if necessary). Observe initially (the second inequality below is due to the fundamental theorem of calculus) that
\begin{equation}\label{Pf_Lem11_Sob_emb}
\|M_Rf_j - M_Rf\|_{L^{\infty}(\R)} \leq \|f_j - f\|_{L^{\infty}(\R)} \leq \|f_j - f\|_{W^{1,1}(\R)}.
\end{equation}

\noindent {\it Case 1}. $r \neq 0$. Using \eqref{Pf_Lem11_Sob_emb} we have
\begin{align*}
M_{R}f_{j}(x)&=\frac{1}{r_{j}}\int_{x}^{x+r_{j}}|f_{j}(y)|\, \dy\\
&=\frac{1}{r_{j}}\int_{x}^{x+r_{j}}\big(|f_{j}(y)|-|f(y)|\big)\, \dy+
\left(\frac{1}{r_{j}}\int_{x}^{x+r_{j}}|f_{}(y)|\,\dy-\frac{1}{r}\int_{x}^{x+r}|f(y)|\,\dy\right)+\frac{1}{r}\int_{x}^{x+r}|f(y)|\,\dy\\
&\to \frac{1}{r}\int_{x}^{x+r}|f(y)|\,\dy
\end{align*}
as $j \to \infty$. From \eqref{Pf_Lem11_Sob_emb} we also know that $M_{R}f_{j}(x)\to M_{R}f(x)$, hence
$$
M_{R}f(x)=\frac{1}{r}\int_{x}^{x+r}|f(y)|\,\dy.
$$
and $r \in \mc{R}f(x)$.

\smallskip

\noindent {\it Case 2}. $r=0$.

\smallskip

{\it Subcase 2.1}. If $r_{j}=0$ for $j \geq j_0$. Then $M_{R}f_{j}(x)=|f_{j}(x)|\to|f(x)|$ by \eqref{Pf_Lem11_Sob_emb}. Another application of \eqref{Pf_Lem11_Sob_emb} gives us that $r=0\in \mc{R}f(x)$.

\smallskip

{\it Subcase 2.2}. If there is a subsequence of $r_{j}>0$. Then, along this subsequence, 
$$ M_{R}f_{j}(x)=\left(\frac{1}{r_{j}}\int_{x}^{x+r_{j}}\big(|f_{j}(y)|-|f(y)|\big)\,\dy+\frac{1}{r_{j}}\int_{x}^{x+r_{j}}|f(y)|\,\dy\right)
$$
This implies that
$$
-\|f_{j}-f\|_{L^\infty(\R)}+\frac{1}{r_{j}}\int_{x}^{x+r_{j}}|f(y)|\,\dy\leq M_{R}f_{j}(x)\leq \|f_{j}-f\|_{L^\infty(\R)}+\frac{1}{r_{j}}\int_{x}^{x+r_{j}}|f(y)|\,\dy.
$$
It follows from \eqref{Pf_Lem11_Sob_emb} and the Lebesgue differentiation theorem that $M_{R}f_{j}(x)\to |f(x)|$ as $j \to \infty$. Another application of \eqref{Pf_Lem11_Sob_emb} gives us that $r=0\in \mc{R}f(x)$.  
\end{proof}

Our next lemma provides an expression for the derivative of the one-sided maximal function. It follows from a result of Haj\l asz and Mal\'{y} \cite{HM}.

\begin{lemma}[Derivative of the one-sided maximal function] \label{representacion de la derivada}
Let $f\in W^{1,1}(\R)$. Almost every point $x \in \R$ has the following property: for any $r \in \mc{R}f(x)$ we have
\begin{equation}\label{Lem_der_formula}
(M_{R}f)'(x)=\frac{1}{r}\int_{x}^{x+r}|f|'(y)\,\dy.
\end{equation}
\end{lemma}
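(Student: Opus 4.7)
The plan is to adapt the minimum-comparison argument of Haj\l asz and Mal\'y \cite{HM} to the one-sided one-dimensional setting, exploiting the crucial fact that $|f|$ is absolutely continuous---and hence continuous at \emph{every} point of $\R$---whenever $f \in W^{1,1}(\R)$. For $r \geq 0$ and $y \in \R$, introduce the auxiliary averages
$$u_r(y) := \frac{1}{r}\int_y^{y+r}|f(t)|\,\dt \quad (r>0), \qquad u_0(y):=|f(y)|.$$
By construction $u_r(y) \leq M_R f(y)$ for every $y$, and the condition $r \in \mc{R}f(x)$ is precisely that $u_r(x) = M_R f(x)$; thus $M_R f - u_r$ is a nonnegative function attaining a global minimum (of value zero) at $y=x$.

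The first key step is to establish that, for every $r>0$, the average $u_r$ is classically differentiable at \emph{every} point of $\R$. Writing $u_r(y) = r^{-1}(V(y+r)-V(y))$ with $V(y):=\int_0^y |f(t)|\,\dt$, the continuity of $|f|$ on all of $\R$ together with the standard fundamental theorem of calculus for continuous integrands yields that $V$ is classically differentiable everywhere with $V'(y)=|f(y)|$. Consequently $u_r$ is classically differentiable at every $y \in \R$ with
$$u_r'(y) = \frac{|f(y+r)|-|f(y)|}{r} = \frac{1}{r}\int_y^{y+r}|f|'(t)\,\dt,$$
the second equality being the fundamental theorem of calculus for the absolutely continuous function $|f|$ (whose derivative satisfies $|f|'(t)=\sgn(f(t))\,f'(t)$ a.e.).

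The second step is the minimum-comparison itself. Since $M_R f$ is absolutely continuous on $\R$ by Tanaka's result recalled above, it is differentiable at a.e. $x$; restrict attention further to such an $x$ that is moreover a differentiability point of $|f|$ (still a.e.). For any $r \in \mc{R}f(x)$ with $r>0$, the difference $M_R f - u_r$ is nonnegative and vanishes at $x$, and both functions are differentiable at $x$, so their derivatives must coincide; this yields \eqref{Lem_der_formula}. In the remaining case $r=0 \in \mc{R}f(x)$, the identical argument with $u_0=|f|$ in place of $u_r$ gives $(M_R f)'(x) = |f|'(x)$, which matches the right-hand side of \eqref{Lem_der_formula} in the natural limit $r \to 0^+$.

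The main obstacle one would naively anticipate---that for some unfortunate $r \in \mc{R}f(x)$ the average $u_r$ could fail to be differentiable at $x$ (e.g., if $x+r$ were merely a Lebesgue point, not a continuity point, of $|f|$), forcing a delicate Fubini-type argument over pairs $(x,r)$ to dodge a measure-zero bad set---is entirely dissolved by the first paragraph's observation. It is the everywhere-continuity of $|f|$, a one-dimensional luxury absent in the general Haj\l asz--Mal\'y framework, that makes $u_r$ classically differentiable at \emph{every} point for \emph{every} $r>0$, so that the envelope identity holds simultaneously for all $r \in \mc{R}f(x)$ with no exceptional set in $r$ to worry about.
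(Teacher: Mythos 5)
Your argument is correct, but it proceeds along a genuinely different route than the paper. The paper's proof is essentially a citation: it invokes Haj\l asz and Mal\'y \cite{HM} (Theorems 1 and 2), which give that $M_Rf$ is approximately differentiable a.e.\ with approximate derivative equal to the right-hand side of \eqref{Lem_der_formula} for all $r\in\mc{R}f(x)$, and then upgrades the approximate derivative to the classical one using the absolute continuity of $M_Rf$ (Tanaka \cite{Ta}). You instead give a self-contained, elementary proof: since $f$ (hence $|f|$) is absolutely continuous and thus continuous everywhere on $\R$, each average $u_r$ with $r>0$ is classically differentiable at \emph{every} point with $u_r'(y)=\frac{1}{r}\int_y^{y+r}|f|'(t)\,\dt$, so the Fermat-type comparison (the nonnegative function $M_Rf-u_r$ vanishes at $x$, hence has derivative zero wherever both pieces are differentiable) yields \eqref{Lem_der_formula} simultaneously for all $r\in\mc{R}f(x)$, with the only exceptional set being the $r$-independent null set where $M_Rf$ or $|f|$ fails to be differentiable; the case $r=0$ is handled by the same comparison with $u_0=|f|$. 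The quantifier order (a.e.\ $x$, then all $r$) is thus correctly respected, which is the one delicate point in such arguments. What your approach buys is independence from the machinery of \cite{HM} (approximate differentiability, valid in all dimensions), at the price of exploiting a strictly one-dimensional luxury, namely the everywhere-continuity of Sobolev functions on $\R$; the paper's approach is shorter on the page and transfers in principle to settings where pointwise continuity of $f$ is unavailable. Both proofs rely on the same external input that $M_Rf$ is absolutely continuous, hence classically differentiable a.e.
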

\noindent Note: When $r=0$, it is understood that we take the value $|f|'(x)$ on the right-hand side of \eqref{Lem_der_formula}.
\begin{proof}
Let us recall that a function $f: \R \to \R$ is said to be {\it approximately differentiable} at a point $x_0\in \R$ if there exists a real number $\alpha$ such that, for any $\varepsilon >0$, the set
$$A_{\varepsilon} = \left\{ x \in \R; \ \frac{|f(x) - f(x_0) - \alpha(x- x_0)|}{|x-x_0|} < \varepsilon \right\}$$
has $x_0$ as a density point. In this case, the number $\alpha$ is called the approximate derivative of $f$ at $x_0$ and it is uniquely determined. Of course, if $f$ is differentiable at $x_0$ then it is approximately differentiable at $x_0$, and the classical and approximate derivatives coincide.

\smallskip

An argument of Haj\l asz and Mal\'{y} \cite[Theorems 1 and 2]{HM} shows that if a function $f \in W^{1,1}(\R)$ then $M_Rf$ is approximately differentiable a.e. and the approximate derivative is given by \eqref{Lem_der_formula} for almost all $x \in \R$ (and in this case for all $r \in \mc{R}f(x)$). Since we also know that $M_Rf$ is absolutely continuous, the fact that it is differentiable almost everywhere in the classical sense concludes the proof.
\end{proof}

\subsection{Preliminaries III - Pointwise convergence in the disconnecting set} The next result is basic yet very useful for our purposes.
\begin{lemma}\label{pass_modulus}
Let $f\in W^{1,1}(\R)$ and $\{f_{j}\}_{j \geq 1}\subset W^{1,1}(\R)$ be such that $\|f_{j}-f\|_{W^{1,1}(\R)}\to0$ as $j\to\infty$. Then $\||f_{j}|-|f|\|_{W^{1,1}(\R)}\to0$ as $j\to\infty$.
\end{lemma}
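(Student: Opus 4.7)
The $L^1$-convergence part is immediate: by the pointwise reverse triangle inequality $\big||f_j(x)|-|f(x)|\big|\leq |f_j(x)-f(x)|$, so $\big\||f_j|-|f|\big\|_{L^1(\R)}\leq \|f_j-f\|_{L^1(\R)}\to 0$. The content of the lemma therefore lies in showing $\big\||f_j|'-|f|'\big\|_{L^1(\R)}\to 0$.

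The plan is to invoke the classical Stampacchia-type chain rule: for any $g\in W^{1,1}(\R)$ one has $|g|'(x)=\sgn(g(x))\,g'(x)$ for a.e.\ $x\in\R$, with the convention $\sgn(0)=0$ and the standard fact that $g'(x)=0$ for a.e.\ $x$ in the level set $\{g=0\}$. Using this for $f$ and each $f_j$, I would decompose
\begin{equation*}
|f_j|'(x)-|f|'(x)=\sgn(f_j(x))\bigl(f_j'(x)-f'(x)\bigr)+\bigl(\sgn(f_j(x))-\sgn(f(x))\bigr)f'(x),
\end{equation*}
so that by the triangle inequality
\begin{equation*}
\big\||f_j|'-|f|'\big\|_{L^1(\R)}\leq \|f_j'-f'\|_{L^1(\R)}+\int_{\R}\big|\sgn(f_j(x))-\sgn(f(x))\big|\,|f'(x)|\,\dx.
\end{equation*}
The first summand tends to $0$ by hypothesis, so it remains only to handle the second summand.

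To that end, the key step is a standard subsequence argument: it suffices to show that every subsequence of $\{f_j\}$ has a further subsequence along which the integral above tends to $0$. Given such a subsequence, the $L^1$-convergence $f_j\to f$ (which follows from $W^{1,1}$-convergence) lets me extract a further subsequence $\{f_{j_k}\}$ with $f_{j_k}\to f$ pointwise almost everywhere. On the set $\{f\neq 0\}$ the sign function is continuous at $f(x)$, so $\sgn(f_{j_k}(x))\to \sgn(f(x))$ for a.e.\ $x$ in this set; on $\{f=0\}$ the factor $f'(x)$ vanishes a.e., so the integrand is identically $0$ there. Since the integrand is dominated pointwise by the fixed $L^1$-function $2|f'|$, the dominated convergence theorem yields
\begin{equation*}
\int_{\R}\big|\sgn(f_{j_k})-\sgn(f)\big|\,|f'|\,\dx\longrightarrow 0.
\end{equation*}
Combining these pieces gives $\big\||f_j|'-|f|'\big\|_{L^1(\R)}\to 0$ along the original sequence, completing the proof.

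The main subtlety is the discontinuity of $\sgn$ at the origin, which is precisely why I need to pass to an a.e.-convergent subsequence rather than argue directly; the annihilating identity $f'=0$ a.e.\ on $\{f=0\}$ is what absorbs the singular contribution from the zero set of $f$.
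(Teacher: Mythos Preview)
Your argument is correct, and it takes a genuinely different route from the paper's. The paper dispatches the derivative part in one line by writing $|g|=\max\{g,-g\}$ and invoking the already-established Lemma~\ref{reduction to lateral max} (continuity of $g\mapsto \max\{g,h\}'$ in $L^1$), so the work is outsourced to that earlier lemma. You instead argue directly from the Stampacchia chain rule $|g|'=\sgn(g)\,g'$, decompose the difference, and kill the sign term by dominated convergence plus the annihilating identity $f'=0$ a.e.\ on $\{f=0\}$. Your approach is self-contained and would survive even without Lemma~\ref{reduction to lateral max}; the paper's approach is shorter in context but relies on the machinery built for the one-sided reduction. One minor simplification available to you: since in one dimension $\|g\|_{L^\infty(\R)}\le \|g\|_{W^{1,1}(\R)}$ (this is the estimate \eqref{Pf_Lem11_Sob_emb} used elsewhere in the paper), the convergence $f_j\to f$ is in fact uniform, so $\sgn(f_j(x))\to\sgn(f(x))$ for every $x$ with $f(x)\neq 0$, and the subsequence extraction is unnecessary.
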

\begin{proof}
Since $\big||f_{j}|-|f|\big| \leq |f_j -f|$ pointwise, it follows that $\||f_{j}|-|f|\|_{L^{1}(\R)}\to0$ as $j \to \infty$. Noting that $|f| = \max\{f, -f\}$, the fact that $\||f_{j}|'-|f|'\|_{L^{1}(\R)}\to0$ follows directly from Lemma \ref{reduction to lateral max}.
\end{proof}

As a consequence of the previous lemmas we obtain the following pointwise convergence result.

\begin{lemma}\label{convergencia puntual donde discolan}
Let $f\in W^{1,1}(\R)$ and $\{f_{j}\}_{j \geq 1}\subset W^{1,1}(\R)$ be such that $\|f_{j}-f\|_{W^{1,1}(\R)}\to0$ as $j\to\infty$. Then
$$
(M_{R}f_{j})'(x)\to (M_{R}f)'(x) %\ \ \text{pointwise}
$$
for almost all points $x$ in the disconnecting set $D:=\{x \in \R;\, M_{R}f(x)>|f(x)|\}$.
\end{lemma}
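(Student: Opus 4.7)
My approach is to combine the explicit representation formula for $(M_R g)'(x)$ from Lemma \ref{representacion de la derivada} with the stability of optimal radii from Lemma \ref{puntos de acumulacion}, exploiting the fact that on the disconnecting set $D$ the optimal radii are uniformly bounded away from $0$. The heart of the argument is a subsequence scheme: pick any $r_j \in \mathcal{R}f_j(x)$, show that $\{r_j\}$ is bounded and bounded below, and then pass to the limit along any convergent subsequence via Lemma \ref{representacion de la derivada}.

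First I would remove a single null set $N$, namely the union of the measure-zero sets off of which the representation $(M_R g)'(x) = r^{-1}\int_x^{x+r}|g|'(y)\,\dy$ (for every $r \in \mathcal{R}g(x)$) fails, taken over $g=f$ and each $g=f_j$. It then suffices to prove pointwise convergence at every $x \in D \setminus N$. Fix such an $x$ and, for each $j$, choose $r_j \in \mathcal{R}f_j(x)$. Boundedness of $\{r_j\}$ follows from $r_j\, M_R f_j(x) \leq \|f_j\|_{L^1(\R)}$ together with $M_R f_j(x) \to M_R f(x) > 0$ and $\|f_j\|_{L^1(\R)} \to \|f\|_{L^1(\R)}$. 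For the lower bound, if $r_{j_k} \to 0$ along a subsequence, Lemma \ref{puntos de acumulacion} would force $0 \in \mathcal{R}f(x)$, i.e.\ $M_R f(x) = |f(x)|$, contradicting $x \in D$. Hence any subsequence of $\{r_j\}$ admits a further subsequence $r_{j_k} \to r$ with $r > 0$, and $r \in \mathcal{R}f(x)$ again by Lemma \ref{puntos de acumulacion}.

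Next I would invoke Lemma \ref{representacion de la derivada} applied at $x$ to each $f_{j_k}$ and to $f$, which gives the splitting
\[
(M_R f_{j_k})'(x) - (M_R f)'(x) = \frac{1}{r_{j_k}} \int_x^{x+r_{j_k}} \bigl(|f_{j_k}|' - |f|'\bigr)\,\dy + \left(\frac{1}{r_{j_k}}\int_x^{x+r_{j_k}}|f|'\,\dy - \frac{1}{r}\int_x^{x+r}|f|'\,\dy \right).
\]
The first term is $O\bigl(r_{j_k}^{-1}\||f_{j_k}|'-|f|'\|_{L^1(\R)}\bigr)$, which tends to $0$ because $r_{j_k}$ stays above $r/2$ for large $k$ and $\||f_{j_k}|' - |f|'\|_{L^1(\R)} \to 0$ by Lemma \ref{pass_modulus}. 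The second term tends to $0$ by absolute continuity of the integral, using $|f|' \in L^1(\R)$ together with $r_{j_k} \to r > 0$. Since every subsequence of $\{r_j\}$ admits a further subsequence along which $(M_R f_{j_k})'(x) \to (M_R f)'(x)$, the full sequence converges.

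The main obstacle I expect is precisely the lower bound $\liminf r_j > 0$: without it the factor $r_{j_k}^{-1}$ could swamp the $L^1$ gain coming from Lemma \ref{pass_modulus}, and there is no reason in general for the optimal radii of a convergent sequence to stay away from $0$. The hypothesis $x \in D$ is used exactly here, via the contrapositive of Lemma \ref{puntos de acumulacion}, to rule out the collapse $r_j \to 0$; on the complementary contact set $\{M_R f = |f|\}$ this method breaks down, which is consistent with the fact that the pointwise statement there requires a genuinely different argument.
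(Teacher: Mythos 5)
Your proof is correct and follows essentially the same route as the paper's: remove the null set where Lemma \ref{representacion de la derivada} fails, use $x\in D$ together with Lemma \ref{puntos de acumulacion} to keep the optimal radii of the $f_j$ bounded and bounded away from zero, and pass to the limit in the representation formula with the help of Lemma \ref{pass_modulus}. The only cosmetic difference is that the paper proves the uniform inclusion $\mathcal{R}f_j(x)\subset[\delta/2,2N]$ for large $j$ and phrases the conclusion as $(M_Rf)'(x)$ being the unique accumulation point of the bounded sequence $\{(M_Rf_j)'(x)\}$, whereas you run a subsequence argument on a chosen $r_j\in\mathcal{R}f_j(x)$ with an explicit two-term splitting; the two formulations are equivalent.
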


\begin{proof} Assume that $D$ has positive measure, otherwise there is nothing to prove (in particular this implies that $f \not\equiv 0$). Let $E_0$ be the set of measure zero for which the statement of Lemma \ref{representacion de la derivada} fails for $f$ (i.e. for all $x \in \R\setminus E_0$, the function $M_{R}f$ is differentiable at $x$, $|f|$ is differentiable at $x$, and \eqref{Lem_der_formula} holds for all $r \in \mc{R}f(x)$). Analogously, let $E_j$ be the set of measure zero for which the statement of Lemma \ref{representacion de la derivada} fails for $f_j$. Let $E = \cup_{j\geq 0} E_j$. Note that $E$ still has measure zero.

\smallskip

Let $x \in D\setminus E$. Then there exist $\delta = \delta(x)>0$ and $N = N(x) <\infty$ such that $\mc{R}f(x)\subset [\delta,N]$. By \eqref{Pf_Lem11_Sob_emb} we have that $M_Rf_j(x) \to M_Rf(x)$ as $j \to \infty$, hence there exists $j_0 = j_0(x)$ such that $M_Rf_j(x) \geq M_Rf(x)/2 >0$ for $j \geq j_0$ (the fact that $M_Rf(x) >0$ follows since $x\in D$). Since $\|f_{j}-f\|_{W^{1,1}(\R)}\to0$ as $j\to\infty$ we find that $\{\|f_j\|_{L^1(\R)}\}_{j\geq1}$ are uniformly bounded. It then follows that there exists a large constant $M$ (it does not hurt to suppose $M>2N$) such that $\mc{R}f_{j}(x) \subset [0,M]$ for $j \geq j_0$.

\smallskip

By Lemma \ref{puntos de acumulacion} there exists $j_1 = j_1(\delta,N,x)$ such that, for every $j\geq j_1$, we have $\mc{R}f_{j}(x) \subset [\frac{\delta}{2},2N]$. In fact, if this were not the case, we could produce a sequence of $r_{j_\ell} \in \mc{R}f_{j_\ell}(x)$, with $j_\ell \to \infty$, trapped in the compact set $[0,\frac{\delta}{2}]\cup[2N,M]$, from which we could extract a convergent subsequence to arrive at a contradiction to Lemma \ref{puntos de acumulacion}. If $r_{j}\in \mc{R}f_{j}(x)$ for $j \geq j_1$, by Lemma  \ref{representacion de la derivada} we obtain
$$
(M_{R}f_{j})'(x)=\frac{1}{r_{j}}\int_{x}^{x+r_{j}}|f_{j}|'(y)\,\dy\leq \frac{1}{r_{j}}\big\||f_{j}|'\big\|_{L^1(\R)}<C<\infty.
$$

We claim that $(M_{R}f)'(x)$ is the unique accumulation point of the sequence \linebreak $\{(M_{R}f_{j})'(x)\}_{j \geq 1}$, which would conclude the proof. Assume that $(M_{R}f_{j_{k}})'(x)$ is convergent through a certain subsequence $\{j_k\}$. By passing to a further subsequence, if necessary, we may assume that $r_{j_{k}}\to r\in \mc{R}f(x)$ (note the use of Lemma \ref{puntos de acumulacion} here). Using Lemmas \ref{representacion de la derivada}  and \ref{pass_modulus} we find
\begin{align*}
(M_{R}f_{j_{k}})'(x)&=\frac{1}{r_{j_{k}}}\int_{x}^{x+r_{j_{k}}}|f_{j_{k}}|'(y)\,\dy\\
&\to\frac{1}{r}\int_{x}^{x+r}|f|'(y)\,\dy=(M_{R}f)'(x)
\end{align*}
as $j_{k}\to\infty$. This verifies our claim.
\end{proof}

\subsection{Proof of Theorem \ref{Thm1}} Let $f\in W^{1,1}(\R)$ and $\{f_{j}\}_{j \geq 1}\subset W^{1,1}(\R)$ be such that $\|f_{j}-f\|_{W^{1,1}(\R)}\to0$ as $j\to\infty$. As already observed, we need to show that $
\|(M_{R}f_{j})'-(M_{R}f)'\|_{L^{1}(\R)}\to0$ as $j \to \infty$. From Lemma \ref{pass_modulus} we may assume without loss of generality that $f_j$ and $f$ are all nonnegative.

\subsubsection{Connecting and disconnecting sets}\label{sec_connect_disconnect}
 We define the connecting and disconnecting sets
\begin{equation}\label{detachment}
C:=\{x\in\R;\ M_{R}f(x)=f(x)\}\ \ \text{and}\ \ D:=\{x\in\R;\ M_{R}f(x)>f(x)\},
\end{equation}
and analogously, for each $j \in \N$, we define
$$
C_{j}:=\{x\in\R;\ M_{R}f_{j}(x)=f_{j}(x)\}\ \ \text{and}\ \ D_{j}:=\{x\in\R;\ M_{R}f_{j}(x)>f_{j}(x)\}.
$$
Naturally, the set $D$, as well as each set $D_j$, is an open set. We now make two observations which are crucial for our argument. The first is due to Tanaka: an application of the argument of \cite[Lemma 2]{Ta} shows that\footnote{Tanaka chooses to present all arguments for the left-sided maximal function $M_Lf$ and hence \cite[Lemma 2(a)]{Ta} states that on the appropriate disconnecting set for $M_L$, $M_Lf$ is non-increasing; adapting this argument shows on the other hand that on the disconnecting set $D$ of the right-sided maximal function, $M_R$ is non-decreasing, as we state.}
\begin{equation}\label{crucial_1}
(M_Rf)'(x) \geq 0 \ \ {\rm for \ a.e} \ \ x \in D.
\end{equation}
Note that for a point $x$ of differentiability of $f$ such that $f'(x) >0$, one must have $x \in D$. This leads us to our second observation,
\begin{equation}\label{crucial_2}
(M_Rf)'(x) = f'(x) \leq 0 \ \ {\rm for \ a.e} \ \ x \in C.
\end{equation}
Similar observations hold for each $f_j$ and the sets $D_j, C_j$.

\smallskip

Now for each $j$ we split our main integral into four nonnegative terms
\begin{align}\label{cuatro integrales}
\begin{split}
\int_{\R}\big|(M_{R}f_{j})'(x)& -(M_{R}f)'(x)\big|\,\dx\\
& =\int_{C\cap C_{j}}|f_{j}'(x)-f'(x)|\,\dx+\int_{D\cap C_{j}}\big|f_{j}'(x)-(M_{R}f)'(x)\big|\,\dx\\
&\ \ \ \ \ \ \ +\int_{C\cap D_{j}}\big|(M_{R}f_{j})'(x)-f'(x)\big|\,\dx+\int_{D\cap D_{j}}\big|(M_{R}f_{j})'(x)-(M_{R}f)'(x)\big|\,\dx\\
&:=(I)_{j}+(II)_{j}+(III)_{j}+(IV)_{j}\,,
\end{split}
\end{align}
and proceed with the analysis of each of them.

\subsubsection{Analysis of $(I)_j$ and $(II)_j$} Let us adopt the usual notation $o(1)$ for a quantity whose absolute value tends to $0$ as $j \to \infty$. The first term is the easiest to deal with since
$$
(I)_{j}\leq \|f_{j}'-f'\|_{L^{1}(\R)} = o(1).
$$
As for the second one, note that
\begin{align}\label{Pf_Thm1_Eq_II_1}
\begin{split}
\int_{D\cap C_{j}}|f_{j}'(x)|\,\dx&\leq \int_{D_{}\cap C_{j}}|f'(x)|\,\dx+\|f_{j}'-f'\|_{L^{1}(\R)}\\
&=\int_{\R}|f'(x)|\,\chi_{D\cap C_{j}}(x)\,\dx +\|f_{j}'-f'\|_{L^{1}(\R)}
\end{split}
\end{align}
and
\begin{align}\label{Pf_Thm1_Eq_II_2}
\int_{D\cap C_{j}}\big|(M_{R}f)'(x)\big|=\int_{\R}\big|(M_{R}f)'(x)\big|\,\chi_{D\cap C_{j}}(x)\,\dx.
\end{align}
From \eqref{Pf_Lem11_Sob_emb} we know that $\chi_{D\cap C_{j}}(x)\to 0$ pointwise everywhere as $j \to \infty$. Hence, from \eqref{Pf_Thm1_Eq_II_1}, \eqref{Pf_Thm1_Eq_II_2} and the dominated convergence theorem, we obtain
\begin{equation}\label{Conclusion_II}
(II)_{j} = o(1).
\end{equation}

\subsubsection{The Brezis-Lieb reduction} From Lemma \ref{convergencia puntual donde discolan} and the classical Brezis-Lieb lemma \cite{BL} we know that 
$$(II)_j + (IV)_j = \int_{D}\big|(M_{R}f_{j})'(x) -(M_{R}f)'(x)\big|\,\dx \to 0$$
as $j \to \infty$ if and only if
\begin{equation}\label{BLReduction}
\int_{D}\big|(M_{R}f_{j})'(x)\big|\,\dx \to \int_{D}\big|(M_{R}f)'(x)\big|\,\dx
\end{equation}
as $j \to \infty$.

\smallskip

From \eqref{crucial_1} and the fundamental theorem of calculus (applied to each connected component of $D_j$ and summing up over this countable number of open intervals) we observe that for each $j$, 
\begin{equation}\label{Fund_Thm_Calc}
\int_{D_{j}}\big|(M_{R}f_{j})'(x)\big|\,\dx=\int_{D_{j}}(M_{R}f_{j})'(x)\,\dx = \int_{D_{j}}f_{j}'(x)\,\dx.
\end{equation}
As a result of \eqref{Fund_Thm_Calc}, writing $D_j$ as the disjoint union $(C \cap D_j) \cup (D \cap D_j)$, we see that for each $j \in \N$ we have the following dichotomy: either 
\begin{equation}\label{Case1_dichotomy} 
\int_{C \cap D_{j}}\big|(M_{R}f_{j})'(x)\big|\,\dx\leq\int_{C \cap D_{j}}f_{j}'(x)\,\dx
\end{equation}
or
\begin{equation}\label{Case2_dichotomy}
\int_{D \cap D_{j}}\big|(M_{R}f_{j})'(x)\big|\dx\leq\int_{D \cap D_{j}}f_{j}'(x)\,\dx.
\end{equation}

\subsubsection{Dichotomy: Case 1} Assume that we go over the subsequence of $j$'s such that \eqref{Case1_dichotomy} holds. Then, using \eqref{crucial_1}, \eqref{crucial_2} and \eqref{Case1_dichotomy}, we have
\begin{align*}
(III)_{j}& =\int_{C\cap D_{j}}\big|(M_{R}f_{j})'(x)-f'(x)\big|\,\dx\\
&=\int_{C\cap D_{j}}\big((M_{R}f_{j})'(x)-f'(x)\big)\,\dx\\
&\leq \int_{C\cap D_{j}}\big(f_{j}'(x)-f'(x)\big)\,\dx\\
&\leq \|f_{j}'-f'\|_{L^{1}(\R)}.
\end{align*}
Hence
\begin{align}\label{case1 3j}
(III)_{j}=o(1)
\end{align}
along this subsequence. Applying \eqref{Fund_Thm_Calc} to $f_j$ in the fourth identity below and to $f$ in the sixth, we find that for any fixed $j$,
\begin{align}\label{Mega_comput}
\begin{split}
\int_{D}\big|&(M_{R}f_{j})' (x)\big|\,\dx  = \int_{D\cap D_j}\big|(M_{R}f_{j})'(x)\big|\,\dx +  \int_{D\cap C_j}\big|(M_{R}f_{j})'(x)\big|\,\dx + (III)_j - (III)_j\\
& = \int_{D\cap D_j}(M_{R}f_{j})'(x)\,\dx +  \int_{D\cap C_j}\big|f_{j}'(x)\big|\,\dx + \int_{C\cap D_{j}}\big((M_{R}f_{j})'(x)-f'(x)\big)\,\dx - (III)_j\\
& = \int_{D_j}(M_{R}f_{j})'(x)\,\dx - \int_{C\cap D_{j}}f'(x)\,\dx + \int_{D\cap C_j}\big|f_{j}'(x)\big|\,\dx - (III)_j\\
& = \int_{D_j} f_{j}'(x)\,\dx  - \int_{C\cap D_{j}}f'(x)\,\dx + \int_{D\cap C_j}\big|f_{j}'(x)\big|\,\dx - (III)_j\\
& = \int_{D_j} \big(f_{j}'(x) - f'(x) \big)\,\dx  + \int_{D}f'(x)\,\dx  -  \int_{D\cap C_{j}}f'(x)\,\dx + \int_{D\cap C_j}\big|f_{j}'(x)\big|\,\dx - (III)_j\\
& = \int_{D_j} \big(f_{j}'(x) - f'(x) \big)\,\dx  + \int_{D}\big|(M_Rf)'(x)\big|\,\dx  -  \int_{D\cap C_{j}}f'(x)\,\dx + \int_{D\cap C_j}\big|f_{j}'(x)\big|\,\dx - (III)_j.
\end{split}
\end{align}
(In fact, note that this identity was derived independently of \eqref{Case1_dichotomy} or \eqref{Case2_dichotomy} and hence holds regardless of whether we are in Case 1 or Case 2 of our dichotomy). Using \eqref{Pf_Thm1_Eq_II_1}, \eqref{case1 3j} and the fact that $\chi_{D\cap C_{j}}(x)\to 0$ pointwise, the dominated convergence theorem gives us
\begin{equation*}
\int_{D}\big|(M_{R}f_{j})'(x)\big|\,\dx  \to \int_{D}\big|(M_Rf)'(x)\big|\,\dx 
\end{equation*}
as $j \to \infty$ (along this subsequence of $j$'s of this Case 1), and from the Brezis-Lieb reduction \eqref{BLReduction} we conclude that
$$(II)_j + (IV)_j  = o(1).$$
In light of \eqref{Conclusion_II} and the non-negativity of each term, this implies that 
$$(IV)_j  = o(1)$$
and we are done for this subsequence of $j$'s.

\subsubsection{Dichotomy: Case 2} Assume now that we go over the subsequence of $j$'s such that \eqref{Case2_dichotomy} holds. From Lemma \ref{convergencia puntual donde discolan} and Fatou's lemma we get
\begin{equation}\label{Fat_Lat_Lim_1}
\int_{D}\big|(M_{R}f)'(x)\big|\,\dx\leq\liminf_{j\to\infty}\int_{D}\big|(M_{R}f_{j})'(x)\big|\,\dx.
\end{equation}
Using \eqref{Case2_dichotomy} and \eqref{crucial_2} respectively, in the second line below, and \eqref{Fund_Thm_Calc} (applied to $f$) in the fourth line below, we obtain for each $j$,
\begin{align*}
\int_{D}\big|(M_{R}f_{j})'(x)\big|\,\dx&=\int_{D\cap D_{j}}\big|(M_{R}f_{j})'(x)\big|\,\dx+\int_{D \cap C_{j}}\big|(M_{R}f_{j})'(x)\big|\,\dx\\
&\leq \int_{D\cap D_{j}}f_{j}'(x)\,\dx-\int_{D \cap C_{j}}f_{j}'(x)\,\dx\\
&= \int_{D}f'(x) + \int_{D \cap D_{j}}\big(f_{j}'(x)-f'(x)\big)\,\dx\ -\int_{D \cap C_{j}} f_{j}'(x) \,\dx - \int_{D \cap C_{j}} f'(x)\,\dx\\
& = \int_{D}\big|(M_{R}f)'(x)\big|\,\dx + \int_{D \cap D_{j}}\big(f_{j}'(x)-f'(x)\big)\,\dx\ -\int_{D \cap C_{j}} f_{j}'(x) \,\dx - \int_{D \cap C_{j}} f'(x)\,\dx.
\end{align*}
The first term we preserve as the main term; the second term is dominated by $\|f_j' - f'\|_{L^1(\R)}$ which is $o(1)$; the third term and fourth terms are $o(1)$ by \eqref{Pf_Thm1_Eq_II_1} and dominated convergence since $\chi_{D\cap C_{j}}(x)\to 0$ pointwise. It then follows from the previous computation that  (along the subsequence of $j$'s we consider)
\begin{equation}\label{Fat_Lat_Lim_2}
\limsup_{j\to\infty}\int_{D}\big|(M_{R}f_{j})'(x)\big|\,\dx \leq \int_{D}\big|(M_{R}f)'(x)\big|\,\dx
\end{equation}
From \eqref{Fat_Lat_Lim_1} and \eqref{Fat_Lat_Lim_2} we obtain 
\begin{equation}\label{BL_Conc_case_2_dic}
\int_{D}\big|(M_{R}f_{j})'(x)\big|\,\dx  \to \int_{D}\big|(M_Rf)'(x)\big|\,\dx 
\end{equation}
as $j \to \infty$ (along this subsequence of $j$'s), and from the Brezis-Lieb reduction \eqref{BLReduction} we conclude that 
$$(II)_j + (IV)_j  = o(1).$$
It follows from \eqref{Conclusion_II} that
$$(IV)_j  = o(1).$$
Returning to identity \eqref{Mega_comput} (which we recall holds for any fixed $j$) we use \eqref{BL_Conc_case_2_dic}, \eqref{Pf_Thm1_Eq_II_1} and the dominated convergence theorem (since $\chi_{D\cap C_{j}}(x)\to 0$ pointwise), to finally arrive at
$$(III)_j  = o(1)$$
along this subsequence of $j's$. This concludes the proof.

\section{Acknowledgments}
\noindent E.C. acknowledges support from CNPq-Brazil grants $305612/2014-0$ and $477218/2013-0$, and FAPERJ grant $E-26/103.010/2012$. L. B. P. has been supported in part by NSF DMS-1402121 and NSF CAREER grant DMS-1652173. E. C. and L. B. P. thank the University of Oxford ``Developing Leaders'' scheme for support for a research visit during this collaboration. J. M. acknowledges support from CNPq-Brazil. The authors are thankful to Kevin Hughes for helpful discussions during the preparation of this manuscript.

\end{document}